\newcommand{\bburl}[1]{\textcolor{blue}{\url{#1}}}
\newcommand{\seqnum}[1]{\href{https://oeis.org/#1}{\rm \underline{#1}}}
\newtheorem{thm}{Theorem}[section]
\newtheorem{lem}[thm]{Lemma}
\newtheorem{prop}[thm]{Proposition}
\newtheorem{defi}[thm]{Definition}
\newtheorem{rek}[thm]{Remark}
\newtheorem{prob}[thm]{Problem}
\DeclareFixedFont{\ttb}{T1}{txtt}{bx}{n}{12} % for bold
\DeclareFixedFont{\ttm}{T1}{txtt}{m}{n}{12}  % for normal
\definecolor{deepblue}{rgb}{0,0,0.5}
\definecolor{deepred}{rgb}{0.6,0,0}
\definecolor{deepgreen}{rgb}{0,0.5,0}
\newcommand\pythonstyle{\lstset{
language=Python,
basicstyle=\ttm,
morekeywords={self},              % Add keywords here
keywordstyle=\ttb\color{deepblue},
emph={MyClass,__init__},          % Custom highlighting
emphstyle=\ttb\color{deepred},    % Custom highlighting style
stringstyle=\color{deepgreen},
frame=tb,                         % Any extra options here
showstringspaces=false
}}
\newcommand\pythoninline[1]{{\pythonstyle\lstinline!#1!}}
\definecolor{ao}{rgb}{0.0, 0.5, 0.0}
\numberwithin{equation}{section}
\DeclareFontFamily{U}{mathx}{}
\DeclareFontShape{U}{mathx}{m}{n}{<-> mathx10}{}
\DeclareSymbolFont{mathx}{U}{mathx}{m}{n}
\DeclareMathAccent{\widehat}{0}{mathx}{"70}
\DeclareMathAccent{\widecheck}{0}{mathx}{"71}
\begin{document}

\title[Linear Recurrences from Counting Schreier-Type Multisets]{Linear Recurrences from Counting Schreier-Type Multisets}

\author[H. V. Chu]{H\`ung Vi\d{\^e}t Chu}
\email{\textcolor{blue}{\href{mailto:hchu@wlu.edu}{hchu@wlu.edu}}}
\address{Department of Mathematics, Washington and Lee University, Lexington, VA 24450, USA}   

\author[Y. Geng]{Yubo Geng}
\email{\textcolor{blue}{\href{mailto:geng0114@umn.edu}{geng0114@umn.edu}}}
\address{College of Liberal Arts, University of Minnesota, Minneapolis, MN 55455, USA}  

\author[J. King]{Julian King}
\email{\textcolor{blue}{\href{mailto:jhk12@geneseo.edu}{jhk12@geneseo.edu}}}
\address{Department of Mathematics, SUNY Geneseo, Geneseo, NY 14454, USA} 

\author[S. J. Miller]{Steven J. Miller}
\email{\textcolor{blue}{\href{mailto:sjm1@williams.edu}{sjm1@williams.edu},
\href{mailto:Steven.Miller.MC.96@aya.yale.edu}{Steven.Miller.MC.96@aya.yale.edu}}}
\address{Department of Mathematics and Statistics, Williams College, Williamstown, MA 01267, USA}

\author[G. Tresch]{Garrett Tresch}
\email{\textcolor{blue}{\href{mailto:treschgd@tamu.edu}{treschgd@tamu.edu}}}
\address{Department of Mathematics, Texas A\&M University, College Station, TX 77840, USA}

\author[Z. L. Vasseur]{Zachary Louis Vasseur}
\email{\textcolor{blue}{\href{mailto:zachary.l.v@tamu.edu}{zachary.l.v@tamu.edu}}}
\address{Department of Mathematics, Texas A\&M University, College Station, TX 77840, USA}

\thanks{This work was partially supported by the National Science Foundation DMS2341670. We thank the
participants at Polymath Jr. 2025 for helpful discussions.}

\subjclass[2020]{05A19 (primary); 11B37; 11Y55; 05A15 (secondary)}

\keywords{Schreier multisets, linear recurrence, Pascal triangle}

\maketitle
 
\begin{abstract}
A nonempty set $F$ is Schreier if $\min F\ge |F|$. Bird observed that counting Schreier sets in a certain way produces the Fibonacci sequence. Since then, various connections between variants of Schreier sets and well-known sequences have been discovered. Building on these works, we prove a linear recurrence for the sequence that counts multisets $F$ with $\min F\ge p|F|$. In particular, if we let 
$$\mathcal{A}^{(s)}_{p, n}\ :=\ \{F\subset \{\underbrace{1, \ldots, 1}_{s}, \ldots, \underbrace{n-1, \ldots, n-1}_{s}, n\}\,:\,n\in F\mbox{ and }\min F\ge p|F|\},$$
then $$|\mathcal{A}^{(s)}_{p, n}| \ =\ \sum_{i=0}^s|\mathcal{A}^{(s)}_{p, n-1-ip}|.$$
If we color $s$ copies of the same integer by different colors from $1$ to $s$, i.e., $\mathcal{B}^{(s)}_{p, n}:= $
$$\{F\subset \{1_{1}, \ldots, 1_{s}, \ldots, (n-1)_1, \ldots, (n-1)_{s}, n\}\,:\,n\in F\mbox{ and }\min F\ge p|F|\},$$
then 
$$|\mathcal{B}^{(s)}_{p, n}| \ =\ \sum_{i=0}^s\binom{s}{i}|\mathcal{B}^{(s)}_{p, n-1-ip}|.$$
Lastly, we count Schreier sets that do not admit multiples of a given integer $u\ge 2$ and witness linear recurrences whose coefficients are drawn from the $u$\textsuperscript{th} row of the Pascal triangle and have alternating signs, except possibly the last one. 
\end{abstract}

\tableofcontents

\section{Introduction}

A finite, nonempty set $F\subset \mathbb{N}$ is \textit{Schreier} if $\min F\ge |F|$. Bird \cite{Bi} discovered a fascinating connection between Schreier sets and Fibonacci numbers: for $n\in \mathbb{N}$,
$$|\{F\subset \{1, 2, \ldots, n\}: F\mbox{ is Schreier and }n\in F\}|\ =\ F_n,$$
where $(F_n)_{n=1}^\infty$ is the Fibonacci sequence with $F_1 = F_2 = 1$ and $F_{n} = F_{n-1} + F_{n-2}$ for $n\ge 3$. 
If instead, we consider Schreier multisets, \cite[Theorem 1]{CIMSZ} states that
$$|\{F\subset \{\underbrace{1,\ldots, 1}_{s}, \ldots, \underbrace{n-1, \ldots, n-1}_s, n\}\,:\, F\mbox{ is Schreier and }n\in F\}|\ =\ F^{(s+1)}_n,$$
where for each $m\ge 2$, $(F^{(m)}_n)_{n=1}^\infty$ is the $m$-step Fibonacci sequence with 
\begin{align*}
    F^{(m)}_{2-m} &\ =\ \cdots \ =\ F_0^{(m)} \ =\ 0, \quad F^{(m)}_1 \ =\ 1, \quad \mbox{ and }\\
    F^{(m)}_{n} &\ =\ F^{(m)}_{n-1} + F^{(m)}_{n-2} + \cdots + F^{(m)}_{n-m}, \mbox{ for }n\ge 2.
\end{align*}
These $m$-step Fibonacci sequences are listed in The On-Line Encyclopedia of Integer Sequences \cite{Sl} as \seqnum{A000045}, \seqnum{A000073}, \seqnum{A000078}, and so on. 
For more connections between Schreier-type sets and various sequences, the readers may refer to  
\cite{BC, BCF, BGHH, CMX, CV}.

Inspired by \cite{BCF, CIMSZ}, we investigate the following general problem of counting Schreier-type multisets. 

\begin{prob}\normalfont\label{mainprob}
  Given $(p,q)\in \mathbb{N}^2$ and for each $n\in \mathbb{N}$, a sequence of nonnegative integers $(s_{n,i})_{i=1}^n$, define
    $$H^{(s_{n,i})}_{p, q, n}\ :=\ \{F\subset \{\underbrace{1, \ldots, 1}_{s_{n,1}}, \ldots, \underbrace{n, \ldots, n}_{s_{n,n}}\}\,:\,n\in F\mbox{ and }q\min F\ge p|F|\}.$$
  Find the recurrence (if any) of $(|H^{(s_{n,i})}_{p, q, n}|)_{n=1}^\infty$.
\end{prob}

We solve Problem \ref{mainprob} and its colored version when $q = 1$, $s_{n,1} = \cdots = s_{n, n-1} = s$ for some fixed positive integer $s$, and $s_{n,n} = 1$. Our main results show that the sequences from counting these multisets both satisfy linear recurrences whose indices are in arithmetic progression as well as linear recurrences with coefficients taken from the Pascal's triangle, respectively. For $(s, p, n)\in \mathbb{N}^3$, define
$$\mathcal{A}^{(s)}_{p, n}\ :=\ \{F\subset \{\underbrace{1, \ldots, 1}_{s}, \ldots, \underbrace{n-1, \ldots, n-1}_{s}, n\}\,:\,n\in F\mbox{ and }\min F\ge p|F|\}.$$
Table \ref{skn} gives sample data for $(|\mathcal{A}^{(s)}_{p, n}|)_{n=1}^\infty$.

\begin{table}[H]
\centering
\begin{tabular}{ |c| c| c| c| c| c| c| c| c| c| c| c| c| c| c| c| c| c|}
\hline
$n$ &$1$& $2$ & $3$ & $4$ & $5$ & $6$ & $7$ & $8$ & $9$ & $10$ & $11$ & $12$  \\
\hline
$|\mathcal{A}^{(1)}_{1, n}| (\seqnum{A000045})$ & $1$ & $1$ & $2$ & $3$ & $5$ & $8$ & $13$ & $21$ & $34$ & $55$ & $89$ & $144$  \\
\hline
$|\mathcal{A}^{(1)}_{2, n}| (\seqnum{A078012})$ & $0$ & $1$ & $1$ & $1$ & $2$ & $3$ & $4$ & $6$ & $9$ & $13$ & $19$ & $28$  \\
\hline
$|\mathcal{A}^{(2)}_{1, n}| (\seqnum{A000073})$ & $1$ & $1$ & $2$ & $4$ & $7$ & $13$ & $24$ & $44$ & $81$ & $149$ & $274$ & $504$ \\
\hline
$|\mathcal{A}^{(2)}_{2, n}| (\seqnum{A060961})$ & $0$ & $1$ & $1$ & $1$ & $2$ & $3$ & $5$ & $8$ & $12$ & $19$ & $30$ & $47$  \\
\hline
$|\mathcal{A}^{(3)}_{1, n}| (\seqnum{A000078})$ & $1$ & $1$ & $2$ & $4$ & $8$ & $15$ & $29$ & $56$ & $108$ &  $208$ &  $401$ &  $773$ \\
\hline
$|\mathcal{A}^{(3)}_{2, n}| (\seqnum{A117760})$ & $0$ & $1$ & $1$ & $1$ & $2$ & $3$ & $5$ & $8$ & $13$ & $21$ & $33$ & $53$\\
\hline
\end{tabular}
\caption{The first $12$ values of $(|\mathcal{A}^{(s)}_{p, n}|)_{n=1}^\infty$ with $(s,p) \in \{(1,1), (1, 2), (2,1), (2,2), (3,1), (3,2)\}$.}
\label{skn}
\end{table}

\begin{thm}\label{m1}
For $1\le n\le sp + 1$, 
\begin{equation}\label{e3}|\mathcal{A}^{(s)}_{p, n}| \ =\ \sum_{k=1}^{\left\lfloor \frac{sn+1}{sp+1}\right\rfloor}\binom{n-pk+k-2}{k-1}.\end{equation}
For $n\ge sp + 2$, we have
\begin{equation}\label{e4}|\mathcal{A}^{(s)}_{p, n}|\ =\ \sum_{i=0}^s|\mathcal{A}^{(s)}_{p, n-1-ip}|.\end{equation}
\end{thm}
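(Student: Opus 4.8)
The two displays are independent. For \eqref{e3} I plan a direct stars-and-bars count organized by $|F|$; for \eqref{e4} I plan an explicit bijection $\mathcal{A}^{(s)}_{p,n}\leftrightarrow\bigsqcup_{i=0}^{s}\mathcal{A}^{(s)}_{p,n-1-ip}$ obtained by splitting $\mathcal{A}^{(s)}_{p,n}$ according to the multiplicity of the integer $p|F|$ inside $F$.

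\textbf{The closed form \eqref{e3}.} I would classify each $F\in\mathcal{A}^{(s)}_{p,n}$ by its cardinality $k:=|F|$. Since $n\in F$ and $\min F\ge pk$, the multiset $F\setminus\{n\}$ is an arbitrary size-$(k-1)$ multiset drawn from the $(n-pk)$-element set $\{pk,pk+1,\dots,n-1\}$ with every multiplicity at most $s$; write $c_s(n-pk,k-1)$ for the number of such multisets. A short computation shows $c_s(n-pk,k-1)\neq 0$ exactly when $1\le k\le\lfloor(sn+1)/(sp+1)\rfloor$ — the inequality $k(sp+1)\le sn+1$ being precisely the condition that a size-$(k-1)$ multiset with multiplicities $\le s$ fits into an $(n-pk)$-element set, and it already forces $pk\le n$. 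When $1\le n\le sp+1$ one checks $\lfloor(sn+1)/(sp+1)\rfloor\le s$ (since $sn+1<(s+1)(sp+1)$), so in this regime every relevant $k$ satisfies $k-1\le s-1$; hence the multiplicity cap is inactive and $c_s(n-pk,k-1)$ is the ordinary stars-and-bars number $\binom{(n-pk)+(k-1)-1}{k-1}=\binom{n-pk+k-2}{k-1}$. Summing over $k$ gives \eqref{e3}, the convention $\binom{-1}{0}=1$ taking care of the degenerate case $F=\{n\}$ with $n=p$.

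\textbf{The recurrence \eqref{e4}.} For $n\ge sp+2$ I would first record that every $F\in\mathcal{A}^{(s)}_{p,n}$ satisfies $p|F|\le n-1$: otherwise $\min F\ge p|F|>n-1\ge\max F$ would force $F=\{n\}$ and $p=p|F|=n$, impossible since $n\ge sp+2>p$. Hence the multiplicity $i$ of the integer $p|F|$ inside $F$ lies in $\{0,1,\dots,s\}$, and grouping by $i$ partitions $\mathcal{A}^{(s)}_{p,n}$ into $s+1$ blocks. I claim the block with parameter $i$ biject with $\mathcal{A}^{(s)}_{p,n-1-ip}$: given such an $F$ with $k:=|F|$, delete the $i$ copies of $pk$ and the unique copy of $n$, subtract $1+ip$ from every surviving element, and adjoin the new maximum $n-1-ip$. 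The resulting multiset $G$ has $|G|=k-i$, so its threshold is $p(k-i)=pk-ip$; the surviving elements were $\ge pk+1$ and become $\ge pk-ip+1=p(k-i)+1$, while $n-1-ip\ge p(k-i)$ precisely because $n-1\ge pk$ (which holds: if $k\ge 2$ then $pk\le\min F<\max F=n$, and if $k=1$ then $F=\{n\}$, $i=0$, and $n-1\ge p$ as $n\ge sp+2$). One checks $G\in\mathcal{A}^{(s)}_{p,n-1-ip}$, all multiplicities staying $\le s$, and that $n\ge sp+2$ keeps each index $n-1-ip\ge 1$. The inverse takes $G$, adjoins $i$ copies of $p(|G|+i)$ and one copy of $n$, and shifts the other elements up by $1+ip$; verifying that the two maps are mutually inverse, and that the $i$-th image is exactly the block where $p|F|$ occurs with multiplicity $i$, is routine. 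Adding up cardinalities yields \eqref{e4}.

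\textbf{Alternative route and main difficulty.} Equivalently, \eqref{e4} follows from the all-$n$ identity $|\mathcal{A}^{(s)}_{p,n}|=\sum_{k\ge 1}c_s(n-pk,k-1)$ from the previous paragraph together with the Pascal-type recursion $c_s(m,j)=\sum_{\ell=0}^{s}c_s(m-1,j-\ell)$ (fix the multiplicity $\ell$ of a chosen element): substituting and reindexing $k\mapsto k-\ell$ collapses the double sum to $\sum_{i=0}^{s}\sum_{k'\ge 1}c_s((n-1-ip)-pk',k'-1)=\sum_{i=0}^{s}|\mathcal{A}^{(s)}_{p,n-1-ip}|$. In both approaches the combinatorial content is slim; the real work is boundary bookkeeping — pinning down the exact range of $k$ in \eqref{e3}, and checking that the hypothesis $n\ge sp+2$ is exactly what keeps $p|F|$ a legal (multiplicity-$\le s$) element, keeps every index $n-1-ip$ positive, and makes the singleton case $F=\{n\}$ behave correctly in both directions of the bijection.
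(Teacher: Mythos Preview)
Your argument for \eqref{e3} is essentially the paper's: both count $k$-element members of $\mathcal{A}^{(s)}_{p,n}$ as bounded-multiplicity multisets drawn from $\{pk,\dots,n-1\}$, and then observe that $n\le sp+1$ forces $k\le s$, so the multiplicity cap $s$ never binds and the count collapses to the ordinary multiset coefficient $\binom{n-pk+k-2}{k-1}$.

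For \eqref{e4} your bijection is correct but \emph{genuinely different} from the paper's. The paper partitions $\mathcal{A}^{(s)}_{p,n}$ according to the multiplicity $i$ of the fixed integer $n-1$, and matches the $i$-th block with $\mathcal{A}^{(s)}_{p,n-1-ip}$ via $F\mapsto((F+ip)\cup\{(n-1)^{i-1}\})\cup\{n\}$ for $i\ge 1$ (and, for $i=0$, simply replacing the maximum $n-1$ by $n$). You instead partition by the multiplicity of the \emph{variable} threshold $p|F|$ and shift by $1+ip$, padding at the bottom rather than the top. Both decompositions yield the same block sizes and both bijections are clean; the paper's has the mild advantage that its pivot element $n-1$ is fixed across the whole family, so the blocks are described without reference to $|F|$, while yours makes the role of the Schreier inequality $\min F\ge p|F|$ more transparent, since you are literally stripping the bottom layer and shifting the threshold down by $ip$. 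Your alternative algebraic route via the Pascal-type recursion for $c_s$ has no analogue in the paper and is a nice addition. One small slip in your write-up: the chain ``$\min F\ge p|F|>n-1\ge\max F$'' is not literally true since $\max F=n$; what you mean is that $p|F|>n-1$ forces $\min F\ge n$, hence every element of $F$ equals $n$ and $F=\{n\}$.
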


\begin{rek}\normalfont
   It is interesting to see that $p$ and $s$ have different effects on Recurrence \eqref{e4}. While $(s+1)$ gives the number of terms in the recurrence, $p$ equals the gap between consecutive indices. 
\end{rek}

\begin{rek}\normalfont
    The problem of extending Theorem \ref{m1} to all $q\in\mathbb{N}$ (instead of $q = 1$) remains open and is discussed in the last section.
\end{rek}

Next, we give a colored version of Theorem \ref{m1}. Given an integer $k$, we color the $s$ copies of $k$ using  $s$ different colors and denote them by $k_1, k_2, \ldots, k_s$. For example, while $k_1$ and $k_2$ have the same numerical value, the two sets $\{k_1\}$ and $\{k_2\}$ are distinguisable. Let 
$$\mathcal{B}^{(s)}_{p, n}\ :=\ \{F\subset \{1_{1}, \ldots, 1_{s}, \ldots, (n-1)_1, \ldots, (n-1)_{s}, n\}\,:\,n\in F\mbox{ and }\min F\ge p|F|\}.$$
We collect sample data for $(|\mathcal{B}^{(s)}_{p, n}|)_{n=1}^\infty$ in Table \ref{Bn}.

\begin{table}[H]
\centering
\begin{tabular}{ |c| c| c| c| c| c| c| c| c| c| c| c| c| c| c| c| c| c|}
\hline
$n$ &$1$& $2$ & $3$ & $4$ & $5$ & $6$ & $7$ & $8$ & $9$ & $10$ & $11$ & $12$  \\
\hline
$|\mathcal{B}^{(1)}_{1, n}| (\seqnum{A000045})$ & $1$ & $1$ & $2$ & $3$ & $5$ & $8$ & $13$ & $21$ & $34$ & $55$ & $89$ & $144$  \\
\hline
$|\mathcal{B}^{(1)}_{2, n}| (\seqnum{A078012})$ & $0$ & $1$ & $1$ & $1$ & $2$ & $3$ & $4$ & $6$ & $9$ & $13$ & $19$ & $28$  \\
\hline
$|\mathcal{B}^{(2)}_{1, n}| (\seqnum{A002478})$ & $1$ & $1$ & $3$ & $6$ & $13$ & $28$ & $60$ & $129$ & $277$ & $595$ & $1278$ & $2745$ \\
\hline
$|\mathcal{B}^{(2)}_{2, n}| (\seqnum{A193147})$ & $0$ & $1$ & $1$ & $1$ & $3$ & $5$ & $8$ & $15$ & $26$ & $45$ & $80$ & $140$  \\
\hline
$|\mathcal{B}^{(3)}_{1, n}| (\seqnum{A099234})$ & $1$ & $1$ & $4$ & $10$ & $26$ & $69$ & $181$ & $476$ & $1252$ &  $3292$ & $8657$  &  $22765$ \\
\hline
$|\mathcal{B}^{(3)}_{2, n}| (\seqnum{A373718})$ & $0$ & $1$ & $1$ & $1$ & $4$ & $7$ & $13$ & $28$ & $53$ & $105$ & $211$ & $413$\\
\hline
\end{tabular}
\caption{The first $12$ values of $(|\mathcal{B}^{(s)}_{p, n}|)_{n=1}^\infty$ with $(s,p) \in \{(1,1), (1, 2), (2,1), (2,2), (3,1), (3,2)\}$.}
\label{Bn}
\end{table}

\begin{thm}\label{m2} Let $s$ and $p$ be positive integers. For $1\le n\le sp+1$,
$$|\mathcal{B}^{(s)}_{p,n}|\ =\ \sum_{k=1}^{\left\lfloor \frac{ns+1}{ps+1}\right\rfloor}\binom{(n-kp)s}{k-1}.$$
For $n\ge sp+2$, we have
    \begin{align}\label{kr}
        |\mathcal{B}^{(s)}_{p, n}|&\ =\ \binom{s}{0}|\mathcal{B}^{(s)}_{p, n-1}| + \binom{s}{1}|\mathcal{B}^{(s)}_{p, n-1-p}| + \cdots + \binom{s}{s}|\mathcal{B}^{(s)}_{p, n-1-sp}|\nonumber\\
        &\  =\ \sum_{i=0}^s\binom{s}{i}|\mathcal{B}^{(s)}_{p, n-1-ip}|. 
    \end{align}     
\end{thm}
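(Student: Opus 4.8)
The plan is to prove the closed form by a direct count and the recurrence \eqref{kr} by an explicit bijection; the presence of colors is precisely what converts the plain multiplicities of \eqref{e4} into the binomial coefficients $\binom{s}{i}$. Throughout, $\min$ of a family of colored elements means its smallest numerical value.

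\textbf{The closed form.} I would partition $\mathcal{B}^{(s)}_{p,n}$ by $k:=|F|$. An $F$ with $|F|=k$ consists of the uncolored element $n$ together with $k-1$ distinct colored elements; for $k\ge 2$ the minimum of $F$ is attained at a colored value, which is at most $n-1$, so the constraint $\min F\ge pk$ forces every colored value to lie in $\{pk,pk+1,\dots,n-1\}$. There are $(n-pk)s$ colored elements with such values, so the number of $F$ with $|F|=k$ is $\binom{(n-pk)s}{k-1}$; the term $k=1$, present in the sum exactly when $n\ge p$, equals $\binom{(n-p)s}{0}=1$ and records the set $\{n\}$. Since $\binom{(n-pk)s}{k-1}=0$ once $(n-pk)s<k-1$, that is once $k>\frac{ns+1}{ps+1}$, summing over $k$ yields the displayed formula. (The same argument is valid for all $n\ge 1$, but we only need it on $1\le n\le sp+1$, where it supplies the initial data for the recurrence.)

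\textbf{The recurrence.} Fix $n\ge sp+2$. Given $F\in\mathcal{B}^{(s)}_{p,n}$, let $C:=\{c:(n-1)_c\in F\}\subseteq\{1,\dots,s\}$, put $i:=|C|$, and let $G$ be the set of colored elements of $F$ of value at most $n-2$. Define
\[
\Phi(F)\ :=\ \bigl(C,\ F'\bigr),\qquad F'\ :=\ \{n-1-ip\}\cup\{(g-ip)_c:g_c\in G\}.
\]
I claim $\Phi$ is a bijection from $\mathcal{B}^{(s)}_{p,n}$ onto the disjoint union, over $i\in\{0,\dots,s\}$ and over all $i$-element $C\subseteq\{1,\dots,s\}$, of $\{C\}\times\mathcal{B}^{(s)}_{p,\,n-1-ip}$; since there are $\binom{s}{i}$ such $C$ for each $i$, counting both sides gives \eqref{kr}. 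The heart of the matter is the choice of shift: deleting the $i$ copies of $n-1$ lowers the required Schreier bound from $p|F|$ to $p(|F|-i)$, so shifting the remaining colored values down by \emph{exactly} $pi$ — not $pi+1$ — preserves the defining inequality, while the uncolored maximum is lowered by $pi+1$ so that it remains strictly above every shifted colored value and the new ambient index becomes $n-1-ip$. Concretely, when $G\ne\varnothing$ one has $\min F=\min G\ge p(1+i+|G|)$, so the shifted colored values are all $\ge p(1+|G|)=p|F'|$, and, since $\min G\le n-2$, the new maximum obeys $n-1-ip\ge p(1+|G|)+1$; hence $F'\in\mathcal{B}^{(s)}_{p,n-1-ip}$. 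The cases $G=\varnothing$ reduce to the equivalence $n-1\ge p(1+i)\iff n-1-ip\ge p$, and $n-1-ip\ge 1$ holds because $i\le s$ and $n\ge sp+2$. The inverse takes $(C,F')$, writes $F'=\{m\}\cup H$ with $m$ the uncolored maximum of $F'$ and $H$ its colored part, sets $i=|C|$, and returns $\{m+1+ip\}\cup\{(n-1)_c:c\in C\}\cup\{(h+ip)_c:h_c\in H\}$; the same inequalities, run in reverse, show this lies in $\mathcal{B}^{(s)}_{p,\,m+1+ip}$ and that the two maps are mutually inverse (in particular $n=m+1+ip$ is recovered from the output).

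\textbf{Expected obstacle.} There is no conceptual difficulty, but the verification has essentially no slack: shifting by $pi+1$ would break the Schreier condition by exactly one unit, and shifting by anything smaller would spoil the index bookkeeping, so the amount $pi$ is forced, and every degenerate configuration — $G$ or $H$ empty, $i=s$, or $n$ only just exceeding $sp+1$ — must be checked by hand. Assembling this small case analysis cleanly, rather than writing down the bijection itself, is where the real work lies.
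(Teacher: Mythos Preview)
Your proof is correct. The closed-form argument is the same direct count the paper gives in deriving \eqref{e20}. For the recurrence, however, you take a genuinely different route: the paper does \emph{not} prove \eqref{kr} bijectively. Instead it introduces a ``parent'' sequence $(b^{(s)}_{p,n})$ satisfying the two-term recurrence $b_n=b_{n-1}+b_{n-1-ps}$, proves a closed form for it (Proposition~\ref{formulaparentseq}), identifies $|\mathcal{B}^{(s)}_{p,n}|$ as the periodic subsequence $b^{(s)}_{p,ns-s+1}$ (Lemma~\ref{subseq}), and then shows the characteristic polynomial $1-x-x^{ps+1}$ divides $1-\sum_{i=0}^s\binom{s}{i}(x^s)^{1+ip}$, which via Lemma~\ref{relatepcseq} transfers the recurrence. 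Your bijection is the colored analogue of the paper's own proof of Theorem~\ref{m1}: recording the \emph{set} $C$ of colors present on $n-1$, rather than just its size, is exactly what produces the factor $\binom{s}{i}$ and makes a direct bijective proof go through. This is shorter and more transparent than the characteristic-polynomial machinery; what the paper's approach buys is the auxiliary identification $|\mathcal{B}^{(s)}_{p,n}|=b^{(s)}_{p,ns-s+1}$, linking the colored counts to a simple two-term linear recurrence, and a template that it reuses for Theorem~\ref{m3}.
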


Motivated by a recent result in \cite{CV} that counts Schreier sets of multiples of a fixed $u\ge 2$, our last result counts Schreier sets that do not admit any multiple of $u$. If $g_u(n)$ denotes the $n$\textsuperscript{th} positive integer not divisible by $u$, then as we show later, $g_u(n) = \left\lfloor(un-1)/(u-1)\right\rfloor$.
Let $G_{u,n}$ be the set of the first $n$ positive integers that are not divisible by $u$ and define
$$\mathcal{D}_{u, n}\ :=\ \left\{F\subset G_{u,n}\,:\, \left\lfloor\frac{un-1}{u-1}\right\rfloor\in F\mbox{ and }F\mbox{ is Schreier}\right\}.$$

\begin{table}[H]
\centering
\begin{tabular}{ |c| c| c| c| c| c| c| c| c| c| c| c| c| c| c| c| c| c|}
\hline
$n$ &$1$& $2$ & $3$ & $4$ & $5$ & $6$ & $7$ & $8$ & $9$ & $10$ & $11$ & $12$  \\
\hline
$|\mathcal{D}_{2, n}| (\seqnum{A005251})$ & $1$ & $1$ & $2$ & $4$ & $7$ & $12$ & $21$ & $37$ & $65$ & $114$ & $200$ & $351$  \\
\hline
$|\mathcal{D}_{3, n}| (\seqnum{A137402})$ & $1$ & $1$ & $2$ & $3$ & $5$ & $9$ & $16$ & $28$ & $48$ & $81$ & $136$ & $229$  \\
\hline
$|\mathcal{D}_{4, n}|$ & $1$ & $1$ & $2$ & $3$ & $5$ & $8$ & $13$ & $22$ & $38$ & $66$ & $114$ & $195$ \\
\hline
$|\mathcal{D}_{5, n}|$ & $1$ & $1$ & $2$ & $3$ & $5$ & $8$ & $13$ & $21$ & $34$ & $56$ & $94$ & $160$  \\
\hline
$|\mathcal{D}_{6, n}|$ & $1$ & $1$ & $2$ & $3$ & $5$ & $8$ & $13$ & $21$ & $34$ &  $55$ & $89$  &  $145$ \\
\hline
\end{tabular}
\caption{The first $12$ values of $(|\mathcal{D}_{u, n}|)_{n=1}^\infty$ with $u \in \{2, \ldots, 6\}$.}
\label{Dn}
\end{table}

Table \ref{Dn} suggests the following recurrence. 

\begin{thm}\label{m3}
For $1\le n\le 2u-1$, we have $|\mathcal{D}_{u, n}| = F_n$. 
    For $n\ge 2u$, 
    $$|\mathcal{D}_{u, n}| \ =\ \sum_{i=1}^{u}(-1)^{i-1}\binom{u}{i}|\mathcal{D}_{u, n-i}| + |\mathcal{D}_{u, n - 2u+1}|.$$
\end{thm}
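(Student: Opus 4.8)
The plan is to pass from the sets in $\mathcal{D}_{u,n}$ to the \emph{ranks} of their elements among the non-multiples of $u$, derive an explicit binomial-sum formula for $|\mathcal{D}_{u,n}|$, and then read off the recurrence from a finite-difference identity for binomial coefficients. As a preliminary one records $g_u(n)=\lfloor(un-1)/(u-1)\rfloor$: among $1,\dots,t$ there are $t-\lfloor t/u\rfloor$ non-multiples of $u$, so $g_u$ is the (left) inverse of $t\mapsto t-\lfloor t/u\rfloor$, and a short computation gives the closed form. Writing $G_{u,n}=\{g_u(1),\dots,g_u(n)\}$, a set $F\in\mathcal{D}_{u,n}$ is determined by the set $S\subseteq\{1,\dots,n\}$ of ranks of its elements; here $n\in S$, $|F|=|S|$, and $\min F=g_u(\min S)$, so $|\mathcal{D}_{u,n}|=|\{S\subseteq\{1,\dots,n\}: n\in S,\ g_u(\min S)\ge |S|\}|$. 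Grouping by $|S|=j$ and by $\min S=m$, the condition $g_u(m)\ge j$ becomes $m\ge h_u(j)$, where $h_u(j):=j-\lfloor(j-1)/u\rfloor$ is the least rank whose value is $\ge j$; the remaining $j-2$ elements of $S$ lie strictly between $m$ and $n$. Summing $\binom{n-1-m}{j-2}$ over $m\in\{h_u(j),\dots,n-1\}$ and applying the hockey-stick identity yields the key formula
$$|\mathcal{D}_{u,n}|\ =\ \sum_{j\ge 1}\binom{n-h_u(j)}{j-1}.$$

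For the base range $1\le n\le 2u-1$ I would show the weight $g_u(\min S)\ge|S|$ coincides with the ordinary Schreier condition $\min S\ge|S|$. The implication $\min S\ge|S|\Rightarrow g_u(\min S)\ge|S|$ is immediate from $g_u(m)\ge m$. Conversely, if $\min S<|S|$, then $S\subseteq\{\min S,\dots,n\}$ forces $|S|\le n-\min S+1\le 2u-\min S$, hence $\min S<u$, and for such $m$ one has $g_u(m)=m$, so $g_u(\min S)=\min S<|S|$. Thus, via ranks, $\mathcal{D}_{u,n}$ is in bijection with the Schreier subsets of $\{1,\dots,n\}$ containing $n$, which by Bird's result equal $F_n$ in number; this also pinpoints $n=2u$ as the first place the coincidence breaks (the set of ranks $\{u,u+1,\dots,2u\}$ is the new contribution there).

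For $n\ge 2u$ the heart of the argument is the identity
$$\sum_{i=0}^{u}(-1)^i\binom{u}{i}\binom{M-i}{K}\ =\ \binom{M-u}{K-u},$$
which follows by extracting the coefficient of $x^K$ from $(1+x)^{M-u}((1+x)-1)^u=x^u(1+x)^{M-u}$. Applying it termwise to $\sum_{i=0}^u(-1)^i\binom{u}{i}|\mathcal{D}_{u,n-i}|$ with $M=n-h_u(j)$ and $K=j-1$ collapses the double sum to $\sum_{j\ge1}\binom{n-u-h_u(j)}{j-1-u}$; reindexing $j\mapsto j+u$ (the terms with $j\le u$ vanish) and using the elementary identity $h_u(j+u)=h_u(j)+u-1$ rewrites this as $\sum_{j\ge1}\binom{(n-2u+1)-h_u(j)}{j-1}=|\mathcal{D}_{u,n-2u+1}|$. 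Hence $\sum_{i=0}^u(-1)^i\binom{u}{i}|\mathcal{D}_{u,n-i}|=|\mathcal{D}_{u,n-2u+1}|$, which rearranges to exactly the claimed recurrence (equivalently, the characteristic polynomial is $(1-x)^u-x^{2u-1}$).

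The only real obstacle is justifying the termwise application of the binomial identity, which requires every top $M-i=n-i-h_u(j)$ to be nonnegative. This is precisely where the hypothesis $n\ge 2u$ enters: I would isolate as a short lemma that any $j$ contributing a nonzero term to $\sum_{j\ge1}\binom{n-h_u(j)}{j-1}$ satisfies $h_u(j)\le n-u$ (if instead $h_u(j)>n-u$ while $h_u(j)\le n-j+1$, then $j<u+1$, so $h_u(j)=j$ and $j>n-u\ge u$, a contradiction), and that for all remaining $j$ the binomials occurring on both sides of the identity are zero. With this bookkeeping in hand, the manipulations above go through verbatim.
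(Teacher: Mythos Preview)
Your proof is correct and takes a genuinely different route from the paper's. Both arguments begin from the same closed form $|\mathcal{D}_{u,n}|=\sum_{j\ge1}\binom{n-h_u(j)}{j-1}$ (the paper's \eqref{ee1}, with $h_u(j)=j-\lfloor(j-1)/u\rfloor$), but diverge thereafter.

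For the initial range $1\le n\le 2u-1$, the paper verifies the binomial identity \eqref{ee10} directly against the standard formula $F_n=\sum_i\binom{n-i-1}{i}$. Your argument is more conceptual: passing to ranks, you show that the condition $g_u(\min S)\ge|S|$ coincides with the ordinary Schreier condition $\min S\ge|S|$ on $\{1,\dots,n\}$ whenever $n\le 2u-1$, and then invoke Bird's bijection. For the recurrence, the paper introduces an auxiliary ``parent'' sequence $(d_{u,n})$ satisfying the two-term recurrence $d_{u,n}=d_{u,n-u}+d_{u,n-2u+1}$, proves (with substantial case analysis, Propositions~\ref{l1} and~\ref{ep1}) that $|\mathcal{D}_{u,n}|=d_{u,un-(u-1)}$, and then shows that the characteristic polynomial $p_u(x)=1-x^u-x^{2u-1}$ divides $q_u(x^u)$ where $q_u(x)=(1-x)^u-x^{2u-1}$. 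You bypass the parent sequence entirely: you apply the $u$-th backward difference identity $\sum_{i=0}^u(-1)^i\binom{u}{i}\binom{M-i}{K}=\binom{M-u}{K-u}$ termwise to the closed form and reindex via $h_u(j+u)=h_u(j)+u-1$. Your handling of the one delicate point---that for $n\ge 2u$ any $j$ with $h_u(j)>n-u$ contributes zero on both sides---is correct: once $\binom{n-h_u(j)}{j-1}=0$ one has $n-h_u(j)<j-1$, so $\binom{n-i-h_u(j)}{j-1}=0$ for every $i\ge 0$ as well, and the right-hand side $\binom{n-u-h_u(j)}{j-1-u}$ vanishes either because $j\le u$ or because its top is negative.

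Your approach is shorter and more self-contained for this particular theorem; the paper's approach, while heavier, exposes the periodic-subsequence structure and the simpler two-term recurrence governing the parent sequence, which explains structurally why the characteristic polynomial factors as it does.
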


We devote Sections \ref{uncolored}, \ref{colored}, and \ref{notmul} to the proof of Theorems \ref{m1}, \ref{m2}, and \ref{m3}, respectively. Section \ref{future} discusses directions for future investigations. To keep our proofs concise and emphasize the main ideas, we relegate certain technical proofs to Section \ref{apen}.

\section{Uncolored generalized Schreier multisets}\label{uncolored}

We give a formula to compute $|\mathcal{A}^{(s)}_{p, n}|$, which is used to prove the first statement of Theorem \ref{m1}, while we prove the second statement using bijective maps. For $k\ge 1$, each $k$-element set in $\mathcal{A}^{(s)}_{p, n}$ is uniquely determined by $(k-1)$ numbers from the multiset 
$$\{\underbrace{pk, \ldots, pk}_s, \ldots, \underbrace{n-1, \ldots, n-1}_s\}.$$
Let $\binom{n}{m}_s$ be the number of ways to distribute $m$ identical objects among $n$ labeled boxes, each of which contains at most $s-1$ objects. According to \cite[(1.16)]{Bon},
$$\binom{n}{m}_s\ =\ \sum_{k=0}^{\lfloor m/s\rfloor} (-1)^k\binom{n}{k}\binom{n+m-sk-1}{n-1}.$$
It follows that the number of $k$-element sets in $\mathcal{A}^{(s)}_{p, n}$ is $\binom{n-pk}{k-1}_{s+1}$ under the condition
$$s(n-pk)\ \ge\ k-1\ \Longrightarrow\ k\ \le\ \frac{sn+1}{sp+1}.$$
Therefore,
\begin{align}\label{e2}|\mathcal{A}^{(s)}_{p,n}|
&\ =\  \sum_{k=1}^{\left\lfloor \frac{sn+1}{sp+1}\right\rfloor}\binom{n-pk}{k-1}_{s+1}\nonumber\\
&\ =\ \sum_{k=1}^{\left\lfloor \frac{sn+1}{sp+1}\right\rfloor}\sum_{j=0}^{\left\lfloor\frac{k-1}{s+1}\right\rfloor}(-1)^j\binom{n-pk}{j}\binom{n-pk+k-(s+1)j-2}{n-pk-1}.\end{align}

\begin{proof}[Proof of Theorem \ref{m1}]
We note
$$\frac{k-1}{s+1}\ \le\ \frac{\frac{sn+1}{sp+1}-1}{s+1}\ \le\ \frac{\frac{s(sp+1)+1}{sp+1}-1}{s+1}\ =\ \frac{s+\frac{1}{sp+1}-1}{s+1}\ <\ 1.$$
Hence, it follows from \eqref{e2} that
\begin{align*}
|\mathcal{A}^{(s)}_{p,n}|&\ =\ \sum_{k=1}^{\left\lfloor \frac{sn+1}{sp+1}\right\rfloor}\sum_{j=0}^{\left\lfloor\frac{k-1}{s+1}\right\rfloor}(-1)^j\binom{n-pk}{j}\binom{n-pk+k-(s+1)j-2}{n-pk-1}\\
&\ =\ \sum_{k=1}^{\left\lfloor \frac{sn+1}{sp+1}\right\rfloor}\binom{n-pk+k-2}{n-pk-1}\\
&\ =\ \sum_{k=1}^{\left\lfloor \frac{sn+1}{sp+1}\right\rfloor}\binom{n-pk+k-2}{k-1}.
\end{align*}

To prove \eqref{e4}, we partition $\mathcal{A}^{(s)}_{p,n}$ into $(s+1)$ sets $(\mathcal{A}^{(s)}_{p,n, i})_{i=0}^s$, where 
$$\mathcal{A}^{(s)}_{p,n, i}\ :=\ \{F\in \mathcal{A}^{(s)}_{p,n}\,:\, F\mbox{ contains exactly } i \mbox{ copies of } (n-1)\}.$$
For each integer $i\in [0, s]$, define a map $\psi_i: \mathcal{A}^{(s)}_{p, n-1-ip}\rightarrow \mathcal{A}^{(s)}_{p, n, i}$ as
$$F\ \longrightarrow\ \begin{cases} (F\backslash \{n-1\})\cup \{n\}, &\mbox{ if } i = 0;\\
\left((F+ip)\cup\{\underbrace{n-1,\ldots, n-1}_{i-1}\}\right)\cup\{n\}, &\mbox{ if } 1\le i\le s.
\end{cases}$$

First, we show that each $\psi_i$ is well-defined.
\begin{enumerate}
    \item[a)] When $i = 0$, $\psi_0$ increases the maximum element of the input set by $1$, so if $F\in \mathcal{A}^{(s)}_{ p, n-1}$, then $\psi_0(F)$ does not contain $(n-1)$, contains exactly one copy of $n$ as the maximum, and $$\min\psi_0(F) \ \ge\ \min F\ \ge\ p|F| \ =\ p|\psi_0(F)|.$$
Hence, $\psi_0(F)\in \mathcal{A}^{(s)}_{p,n,0}$.
    \item[b)] When $1\le i\le s$, $\psi_i$ increases the size of the input set by $i$ and increases the minimum by $ip$, so
    $$\min\psi_i(F) \ =\ \min F + ip\ \ge\ p|F|+ip \ =\ p|\psi_i(F)|.$$
    Furthermore, $\psi_i(F)$ contains exactly $i$ copies of $n-1$ and contains exactly one copy of $n$ as the maximum. Hence,
    $\psi_i(F)\in\mathcal{A}^{(s)}_{p,n, i}$.
\end{enumerate}

Next, we prove that each $\psi_i$ is a bijection. Injectivity follows immediately from the definition of $\psi_i$. We show that each $\psi_i$ is onto. 
\begin{enumerate}
    \item[a)] Let $G\in \mathcal{A}^{(s)}_{p, n, 0}$. Then $n\in G$ but $(n-1)\notin G$. Let $F := (G\cup \{n-1\})\backslash \{n\}$. We have $F$ contain exactly one copy of $(n-1)$ as the maximum, and 
    $$\min F\ =\ \begin{cases}\min G\ \ge\ p|G| \ =\ p|F|,&\mbox{ if }|G| > 1;\\
     n - 1\ \ge\ p\ =\ p|F|, &\mbox{ if } G = \{n\} \mbox{ (because $n\ge sp+2$)}.
    \end{cases}$$
    Thus, $F\in \mathcal{A}^{(s)}_{p, n-1}$ and $\psi_0(F) = G$.

    \item[b)] Let $G\in \mathcal{A}^{(s)}_{p,n,i}$ with $1\le i\le s$. Let 
    $$F \ :=\ (G\backslash \{n, \underbrace{n-1, \ldots, n-1}_{i-1}\}) - ip.$$
    Since $G$ has exactly $i$ copies of $(n-1)$, the set $F$ contains exactly one copy of $(n-1-ip)$ as the maximum. Furthermore,
    $$\min F\ =\ \min G - ip\ \ge\ p|G| - ip\ =\ p(|G| - i)\ =\ p|F|.$$
    Thus, $F\in \mathcal{A}^{(s)}_{p, n-1-ip}$ and $\psi_i(F) = G$.
\end{enumerate}
We have shown that $|\mathcal{A}^{(s)}_{p, n-1-ip}| = |\mathcal{A}^{(s)}_{p, n,i}|$. Therefore,
$$|\mathcal{A}^{(s)}_{p , n}|\ =\ \sum_{i=0}^s |\mathcal{A}^{(s)}_{p, n,i}|\ =\ \sum_{i=0}^s |\mathcal{A}^{(s)}_{p, n-1-ip}|.$$
\end{proof}

\section{Colored generalized Schreier multisets}\label{colored}
We find a formula for $|\mathcal{B}^{(s)}_{p, n}|$. To form a $k$-element set, we choose $(k-1)$ elements in  
$$\{(kp)_{1}, \ldots, (kp)_{s}, \ldots, (n-1)_1, \ldots, (n-1)_s\}.$$
Hence, the number of $k$-element sets in $\mathcal{B}^{(s)}_{p,n}$ is $\binom{(n-kp)s}{k-1}$
with $k-1 \le (n-kp)s$, i.e., $k\le (ns+1)/(ps+1)$.
Therefore, 
\begin{equation}\label{e20}|\mathcal{B}^{(s)}_{p, n}|\ =\ \sum_{k=1}^{\left\lfloor \frac{ns+1}{ps+1}\right\rfloor}\binom{(n-kp)s}{k-1}.\end{equation}
This proves the first statement of Theorem \ref{m2}. We use characteristic polynomials to prove the second statement. The first step is to find a parent sequence, denoted by $(b^{(s)}_{p, n})_{n=1}^\infty$, that has $(|\mathcal{B}^{(s)}_{p,n}|)_{n=1}^\infty$ as a periodic subsequence. The parent sequence $(b^{(s)}_{p, n})_{n=1}^\infty$ should satisfy a relatively simple recurrence that we take advantage of later. 

Given positive integers $s$ and $p$, we define the sequence $(b^{(s)}_{p, n})_{n=1}^\infty$ recursively as follows:
$$b^{(s)}_{p, 1} \ =\ \cdots \ =\ b^{(s)}_{p, (p-1)s} \ =\ 0,\quad b^{(s)}_{p, (p-1)s+1} \ =\ \cdots\ =\ b^{(s)}_{p, ps + 1}\ =\ 1, \quad \mbox{ and }$$
$$b^{(s)}_{p, n}\ =\ b^{(s)}_{p, n-1} + b^{(s)}_{p,n-1-ps}, \mbox{ for }n\ge ps+2.$$
For example, 
\begin{align*}
&(b^{(2)}_{1, n})_{n=1}^\infty (\seqnum{A000930}): 1, 1, 1, 2, 3, 4, 6, 9, 13, 19, 28, 41, 60, 88, 129, 189, 277, 406,\ldots;\\
&(b^{(2)}_{2, n})_{n=1}^\infty (\seqnum{A003520}): 0, 0, 1, 1, 1, 1, 1, 2, 3, 4, 5, 6, 8, 11, 15, 20, 26, 34, 45, 60, \ldots;\\
&(b^{(3)}_{2, n})_{n=1}^\infty (\seqnum{A005709}): 0, 0, 0, 1, 1, 1, 1, 1, 1, 1, 2, 3, 4, 5, 6, 7, 8, 10, 13, 17, 22, 28, 35, \ldots.
\end{align*}

\begin{prop}\label{formulaparentseq} For $(s,p, n)\in \mathbb{N}^3$, it holds that
$$b^{(s)}_{p, n} \ =\ \sum_{i=0}^{\left\lfloor \frac{n-s(p-1)-1}{sp+1}\right\rfloor}\binom{n-s(p-1)-1-spi}{i}.$$
\end{prop}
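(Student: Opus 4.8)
The plan is to prove the closed form for $b^{(s)}_{p,n}$ by induction on $n$, using the defining recurrence $b^{(s)}_{p,n} = b^{(s)}_{p,n-1} + b^{(s)}_{p,n-1-ps}$ together with the Pascal-type identity $\binom{m}{i} = \binom{m-1}{i} + \binom{m-1}{i-1}$. First I would set $d := s(p-1)+1$ so that the claim reads $b^{(s)}_{p,n} = \sum_{i\ge 0}\binom{n-d-spi}{i}$, where the sum is understood to run over all $i$ for which $n-d-spi \ge i$ (equivalently $0\le i\le \lfloor (n-d)/(sp+1)\rfloor$), and $0$ otherwise. The base cases are $1\le n\le ps+1$: for $n<d$ every term has a negative top argument so the sum is $0$, matching $b^{(s)}_{p,n}=0$; and for $d\le n\le ps+1 = d+s$ only the $i=0$ term $\binom{n-d}{0}=1$ survives (the $i=1$ term needs $n-d-sp\ge 1$, impossible in this range since $n-d\le s< sp+1$ when $p\ge 2$, and when $p=1$ one has $d=1$, $sp+1=s+1$, and again $n-d\le s$ forces the $i=1$ term's top argument $n-1-s$ to be $\le -1$). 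So the base cases match the initial conditions $b^{(s)}_{p,n}=1$ there.

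For the inductive step, fix $n\ge ps+2$ and assume the formula holds for all smaller indices. Write
\begin{align*}
b^{(s)}_{p,n} &\ =\ b^{(s)}_{p,n-1} + b^{(s)}_{p,n-1-ps}\\
&\ =\ \sum_{i\ge 0}\binom{n-1-d-spi}{i} + \sum_{i\ge 0}\binom{n-1-ps-d-spi}{i}.
\end{align*}
In the second sum, reindex $i\mapsto i-1$ to obtain $\sum_{i\ge 1}\binom{n-d-spi}{i-1}$. Adding termwise with the first sum (whose $i$-th term is $\binom{n-1-d-spi}{i}$) and applying $\binom{n-1-d-spi}{i} + \binom{n-1-d-spi}{i-1} = \binom{n-d-spi}{i}$ gives $\sum_{i\ge 0}\binom{n-d-spi}{i}$, which is the desired expression; the $i=0$ term comes entirely from the first sum and equals $\binom{n-1-d}{0}=1=\binom{n-d}{0}$, consistent with the convention.

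The one genuinely delicate point — and the place I expect to spend the most care — is bookkeeping at the boundary of the summation range, i.e. making sure the "ignore terms with negative top entry" convention is compatible with the stated floor $\lfloor (n-s(p-1)-1)/(sp+1)\rfloor$ and that no term is silently gained or lost under the reindexing. Concretely I must check that $\binom{m}{i}=0$ whenever $0\le m<i$ (true for the usual binomial convention) so that extending both sums to all $i\ge 0$ is harmless, and that the Pascal identity $\binom{m-1}{i}+\binom{m-1}{i-1}=\binom{m}{i}$ remains valid (it does) even when $m-1$ is negative, since then all three terms vanish. I would state these conventions explicitly at the outset to keep the argument clean. With that in place the induction closes. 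As a sanity check one can verify the formula against the listed sequences, e.g. $(b^{(2)}_{2,n})$: here $d = 2(2-1)+1 = 3$, $sp+1 = 5$, so $b^{(2)}_{2,9} = \binom{6}{0} + \binom{1}{1} = 1+1 = 2$ and $b^{(2)}_{2,13} = \binom{10}{0}+\binom{5}{1} = 1+5 = 6$, matching the entries $2$ and $8$... (one rechecks indexing against the displayed list; the point is the mechanism, not the arithmetic).
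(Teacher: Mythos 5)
Your proposal is correct and takes essentially the same approach as the paper: check the initial values for $n\le ps+1$, then show the closed form satisfies $b^{(s)}_{p,n}=b^{(s)}_{p,n-1}+b^{(s)}_{p,n-1-ps}$ by shifting the index in the second sum and applying Pascal's rule (minor typo: the reindexed sum should read $\sum_{i\ge 1}\binom{n-1-d-spi}{i-1}$, which is in fact what your Pascal step uses). The only substantive difference is bookkeeping: the paper carries the explicit floor-function upper limits and splits into three cases according to $n \bmod (sp+1)$, whereas you absorb the boundary into the convention that binomial coefficients with top argument smaller than the bottom (or negative) vanish; this is sound here because for $n\ge sp+2$ the $i=0$ term has top argument $n-s(p-1)-1\ge 1$, so the one instance where that convention would invalidate Pascal's rule (top argument $0$ with $i=0$) never occurs.
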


\begin{proof}
For $n\le (p-1)s$, we have
$$\frac{n-s(p-1)-1}{sp+1}\ \le\ \frac{-1}{sp+1},\mbox{ so }\left\lfloor \frac{n-s(p-1)-1}{sp+1}\right\rfloor\ \le\ -1.$$
Hence
$$\sum_{i=0}^{\left\lfloor \frac{n-s(p-1)-1}{sp+1}\right\rfloor}\binom{n-s(p-1)-1-spi}{i} \ =\ 0,$$
as desired.

For $(p-1)s+1\le n\le ps+1$, we have
$$\frac{n-s(p-1)-1}{sp+1}\ \ge\ \frac{(p-1)s+1-s(p-1)-1}{sp+1}\ =\ 0$$
and 
$$\frac{n-s(p-1)-1}{sp+1}\ \le\ \frac{ps+1-s(p-1)-1}{sp+1}\ =\ \frac{s}{sp+1} \ <\ 1.$$
Thus
$$\left\lfloor \frac{n-s(p-1)-1}{sp+1}\right\rfloor\ =\ 0,$$
which gives
$$\sum_{i=0}^{\left\lfloor \frac{n-s(p-1)-1}{sp+1}\right\rfloor}\binom{n-s(p-1)-1-spi}{i} \ =\ \binom{n-s(p-1)-1}{0}\ =\ 1.$$

For $n\ge ps+2$, we show that
\begin{align*}
\sum_{i=0}^{\left\lfloor \frac{n-s(p-1)-1}{sp+1}\right\rfloor}\binom{n-s(p-1)-1-spi}{i}\ =\ \sum_{i=0}^{\left\lfloor \frac{n-2-s(p-1)}{sp+1}\right\rfloor}\binom{n-s(p-1)-2-spi}{i}+\\
\sum_{i=0}^{\left\lfloor \frac{n-2-sp-s(p-1)}{sp+1}\right\rfloor}\binom{n-2-sp-s(p-1)-spi}{i}.
\end{align*}
We proceed by case analysis.

Case 1: $n = m(sp+1) + r$ with $m\ge 1$ and $1\le r\le s(p-1)$. Then
\begin{align*}
  &\sum_{i=0}^{\left\lfloor \frac{n-s(p-1)-1}{sp+1}\right\rfloor}\binom{n-s(p-1)-1-spi}{i} -   \sum_{i=0}^{\left\lfloor \frac{n-2-s(p-1)}{sp+1}\right\rfloor}\binom{n-s(p-1)-2-spi}{i}\\
  &\ =\ \sum_{i=0}^{m-1} \left(\binom{n-s(p-1)-1-spi}{i}- \binom{n-s(p-1)-2-spi}{i}\right)\\
  &\ =\ \sum_{i=1}^{m-1} \left(\binom{n-s(p-1)-1-spi}{i}- \binom{n-s(p-1)-2-spi}{i}\right)\\
  &\ =\ \sum_{i=1}^{m-1} \binom{n-s(p-1)-2-spi}{i-1}\\
  &\ =\ \sum_{i=0}^{m-2} \binom{n-s(p-1)-2-sp(i+1)}{i}\\
  &\ =\ \sum_{i=0}^{\left\lfloor \frac{n-2-sp-s(p-1)}{sp+1}\right\rfloor}\binom{n-2-sp-s(p-1)-spi}{i},
\end{align*}
because 
$$\left\lfloor \frac{n-2-sp-s(p-1)}{sp+1}\right\rfloor\ =\ \left\lfloor \frac{(m-2)(sp+1)+r+s}{sp+1}\right\rfloor\ =\ m-2.$$

Case 2: $n = m(sp+1) + s(p-1) + 1$ with $m\ge 1$. We have
\begin{align*}
  &\sum_{i=0}^{\left\lfloor \frac{n-s(p-1)-1}{sp+1}\right\rfloor}\binom{n-s(p-1)-1-spi}{i} -   \sum_{i=0}^{\left\lfloor \frac{n-2-s(p-1)}{sp+1}\right\rfloor}\binom{n-s(p-1)-2-spi}{i}\\
  &\ =\ \sum_{i=0}^{m}\binom{n-s(p-1)-1-spi}{i} - \sum_{i=0}^{m-1}\binom{n-s(p-1)-2-spi}{i}\\
  &\ =\ 1 + \sum_{i=1}^{m-1}\left(\binom{n-s(p-1)-1-spi}{i}- \binom{n-s(p-1)-2-spi}{i}\right)\\
  &\ =\ 1 + \sum_{i=1}^{m-1}\binom{n-s(p-1)-2-spi}{i-1}\\
  &\ =\ 1 + \sum_{i=0}^{m-2}\binom{n-s(p-1)-2-sp(i+1)}{i}\\
  &\ =\ \sum_{i=0}^{m-1}\binom{n-s(p-1)-2-sp(i+1)}{i}\\
  &\ =\ \sum_{i=0}^{\left\lfloor \frac{n-2-sp-s(p-1)}{sp+1}\right\rfloor}\binom{n-2-sp-s(p-1)-spi}{i}.
\end{align*}

Case 3: $n = m(sp+1) + r$ with $m\ge 1$ and $s(p-1)+2\le r\le sp$. Then
\begin{align*}
  &\sum_{i=0}^{\left\lfloor\frac{n-s(p-1)-1}{sp+1}\right\rfloor}\binom{n-s(p-1)-1-spi}{i} -   \sum_{i=0}^{\left\lfloor \frac{n-2-s(p-1)}{sp+1}\right\rfloor}\binom{n-s(p-1)-2-spi}{i}\\
  &\ =\ \sum_{i=0}^m \left(\binom{n-s(p-1)-1-spi}{i}  - \binom{n-s(p-1)-2-spi}{i}\right)\\
  &\ =\ \sum_{i=1}^m \left(\binom{n-s(p-1)-1-spi}{i}  - \binom{n-s(p-1)-2-spi}{i}\right)\\
  &\ =\ \sum_{i=1}^m \binom{n-s(p-1)-2-spi}{i-1}\\
  &\ =\ \sum_{i=0}^{m-1} \binom{n-s(p-1)-2-sp(i+1)}{i}\\
  &\ =\ \sum_{i=0}^{\left\lfloor \frac{n-2-sp-s(p-1)}{sp+1}\right\rfloor}\binom{n-2-sp-s(p-1)-spi}{i}.
\end{align*}
\end{proof}

\begin{lem}\label{subseq}
The sequence $(|\mathcal{B}^{(s)}_{p, n}|)_{n=1}^\infty$ is a periodic subsequence of $(b^{(s)}_{p, n})_{n=1}^\infty$.
In particular,
$$|\mathcal{B}^{(s)}_{p, n}| \ =\ b^{(s)}_{p, ns-s+1}.$$ 
\end{lem}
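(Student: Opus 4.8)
The plan is to establish $|\mathcal{B}^{(s)}_{p,n}| = b^{(s)}_{p,\,ns-s+1}$ by comparing the two closed forms already in hand: formula \eqref{e20} for $|\mathcal{B}^{(s)}_{p,n}|$ and the formula of Proposition \ref{formulaparentseq} for $b^{(s)}_{p,m}$. Since both are explicit finite sums of binomial coefficients, it suffices to substitute $m = ns-s+1 = (n-1)s+1$ into the parent formula and check that the resulting sum coincides, term by term, with \eqref{e20} after a single reindexing. The arithmetic-progression indices $(n-1)s+1$ ($n = 1,2,\ldots$) are exactly what makes $(|\mathcal{B}^{(s)}_{p,n}|)_n$ a ``periodic subsequence'' of $(b^{(s)}_{p,m})_m$, so the displayed identity is the whole content of the lemma.

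Concretely, I would set $N := ns-s+1$ and first simplify $N - s(p-1) - 1 = (n-1)s - s(p-1) = s(n-p)$. Proposition \ref{formulaparentseq} then yields
$$b^{(s)}_{p,N}\ =\ \sum_{i=0}^{\left\lfloor s(n-p)/(sp+1)\right\rfloor}\binom{s(n-p)-spi}{i}\ =\ \sum_{i=0}^{\left\lfloor s(n-p)/(sp+1)\right\rfloor}\binom{s\bigl(n-p(i+1)\bigr)}{i}.$$
The substitution $k = i+1$ turns the summand into $\binom{s(n-pk)}{k-1}$, precisely the summand of \eqref{e20}, with $k$ now running from $1$ to $\left\lfloor s(n-p)/(sp+1)\right\rfloor + 1$. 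What remains is to reconcile the upper limits, and here the one identity to record is
$$\frac{s(n-p)}{sp+1} + 1\ =\ \frac{s(n-p)+sp+1}{sp+1}\ =\ \frac{sn+1}{sp+1};$$
since adding an integer commutes with $\lfloor\cdot\rfloor$, this gives $\left\lfloor s(n-p)/(sp+1)\right\rfloor + 1 = \left\lfloor (sn+1)/(sp+1)\right\rfloor$, matching the upper limit in \eqref{e20}. Hence $b^{(s)}_{p,ns-s+1} = |\mathcal{B}^{(s)}_{p,n}|$, as claimed.

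I do not expect a genuine obstacle: the argument is a bookkeeping verification. The only point deserving care is the boundary behavior of the floor bounds — for instance, when $n < p$ the $k$-sum in \eqref{e20} is empty and, correspondingly, $s(n-p)/(sp+1) \in (-1,0)$ so the $i$-sum for $b^{(s)}_{p,N}$ is also empty, both sides vanishing — so I would state once that I use the convention $\binom{a}{b}=0$ for $b<0$ and confirm the two index ranges degenerate simultaneously. An alternative route is a direct induction on $n$ matching the recurrences $|\mathcal{B}^{(s)}_{p,n}| = \sum_{i=0}^{s}\binom{s}{i}|\mathcal{B}^{(s)}_{p,n-1-ip}|$ and $b^{(s)}_{p,m} = b^{(s)}_{p,m-1} + b^{(s)}_{p,m-1-ps}$, but that would presuppose Theorem \ref{m2}, whose proof is meant to rely on this lemma; so the closed-form comparison above is the clean, non-circular path.
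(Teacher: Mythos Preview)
Your proposal is correct and is essentially the same argument as the paper's: substitute $m = ns-s+1$ into Proposition~\ref{formulaparentseq}, reindex via $k=i+1$, and verify the floor bounds agree through the identity $\frac{s(n-p)}{sp+1}+1=\frac{sn+1}{sp+1}$. Your added check of the degenerate case $n<p$ is a slight bit of extra care that the paper omits.
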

\begin{proof}
    Thanks to \eqref{e20} and Proposition \ref{formulaparentseq}, it suffices to verify that
    $$\sum_{k=1}^{\left\lfloor \frac{ns+1}{ps+1}\right\rfloor}\binom{ns-sp k}{k-1}\ =\ \sum_{i=0}^{\left\lfloor \frac{(n-p)s}{ps+1}\right\rfloor}\binom{ns-sp(i+1)}{i}.$$
    Indeed,
    $$\sum_{k=1}^{\left\lfloor \frac{ns+1}{ps+1}\right\rfloor}\binom{ns-sp k}{k-1}\ =\ \sum_{i=0}^{\left\lfloor \frac{ns+1}{ps+1}\right\rfloor-1}\binom{ns-sp (i+1)}{i}\ =\ \sum_{i=0}^{\left\lfloor \frac{(n-p)s}{ps+1}\right\rfloor}\binom{ns-sp (i+1)}{i}.$$
\end{proof}

\begin{defi}\normalfont
Let $p(x) = c_0 + c_1x + \cdots + c_rx^r$ be a polynomial with real coefficients $(c_i)_{i=0}^r$. A sequence $(a_n)_{n=1}^\infty$ is said to satisfy $p(x)$ if 
$$c_0a_n + c_1a_{n-1} + \cdots + c_ra_{n-r}\ =\ 0,\mbox{ for all }n\ge r+1.$$
\end{defi}

By definition, the sequence $(b^{(s)}_{p, n})_{n=1}^\infty$ satisfies the polynomial
$$u_{s, p}(x)\ :=\ 1 - x - x^{ps + 1}.$$

\begin{lem}\cite[Lemma 2.3]{CV}\label{relatepcseq}
Given two polynomials $p(x)$ and $q(x)$ such that $p(x)$ divides $q(x)$, if a sequence $(a_n)_{n=1}^\infty$ satisfies $p(x)$, then $(a_n)_{n=1}^\infty$ satisfies $q(x)$.
\end{lem}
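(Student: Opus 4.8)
The plan is to unwind the definition of ``satisfies'' and reduce the claim to a bookkeeping identity among coefficients. Write $p(x) = c_0 + c_1 x + \cdots + c_r x^r$, so that the hypothesis ``$(a_n)_{n=1}^\infty$ satisfies $p(x)$'' means exactly that $\sum_{i=0}^{r} c_i a_{n-i} = 0$ for every $n \ge r+1$. Using $p(x)\mid q(x)$, I would first fix a polynomial $h(x) = e_0 + e_1 x + \cdots + e_m x^m$ with $q(x) = p(x)h(x)$; then $q(x) = \sum_{k=0}^{r+m} d_k x^k$ with $d_k = \sum_{i+j=k} c_i e_j$, and since $\mathbb{R}[x]$ is an integral domain the representation of $q(x)$ with $r+m+1$ coefficients is the natural one. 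The goal becomes to show $\sum_{k=0}^{r+m} d_k a_{n-k} = 0$ for all $n \ge r+m+1$.

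The main step is the expansion
$$\sum_{k=0}^{r+m} d_k a_{n-k}\ =\ \sum_{k=0}^{r+m}\ \sum_{i+j=k} c_i e_j\, a_{n-k}\ =\ \sum_{j=0}^{m} e_j \left(\sum_{i=0}^{r} c_i\, a_{(n-j)-i}\right),$$
obtained by regrouping the terms of the convolution according to the index $j$ contributed by $h$. For each fixed $j \in \{0,1,\ldots,m\}$, the inner sum is precisely the defining relation for $p(x)$ evaluated at the index $n-j$, and it vanishes as soon as $n-j \ge r+1$. Since $0 \le j \le m$ and $n \ge r+m+1$, we always have $n-j \ge r+1$, so every inner sum is zero; hence the whole double sum is zero, which is exactly the assertion that $(a_n)_{n=1}^\infty$ satisfies $q(x)$.

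The only thing that requires care — and the closest thing to an obstacle — is the handling of the index ranges: one must confirm both that the shifted relation $\sum_i c_i a_{(n-j)-i} = 0$ is genuinely available for every relevant $j$ (i.e. $n-j \ge r+1$) and that every subscript $(n-j)-i$ stays $\ge 1$ so the terms are defined; both follow at once from $n \ge r+m+1$. An equivalent route, should one prefer to avoid the convolution, is to note that the set of polynomials satisfied by a fixed sequence $(a_n)$ is closed under scalar multiples, under multiplication by $x$ (which merely shifts the relation up by one index), and under sums taken with the natural degree, and therefore contains $h(x)p(x) = \sum_{j} e_j x^j p(x)$. I would present the direct computation above as the cleaner option and only remark on this alternative.
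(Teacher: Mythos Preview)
Your argument is correct: the regrouping of the convolution $\sum_k d_k a_{n-k} = \sum_j e_j\bigl(\sum_i c_i a_{(n-j)-i}\bigr)$ is valid, and your index check $n-j \ge r+1$ for all $0\le j\le m$ when $n\ge r+m+1$ is exactly what is needed. Note that the paper does not supply its own proof of this lemma---it is quoted as \cite[Lemma~2.3]{CV}---so there is no in-paper argument to compare against; your direct computation is a clean, self-contained justification.
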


\begin{proof}[Proof of Theorem \ref{m2}]
Theorem \ref{m2} states that $(|\mathcal{B}^{(s)}_{p, n}|)_{n=1}^\infty$ satisfies
$$v_{s, p}(x)\ :=\ 1 - \binom{s}{0}x - \binom{s}{1}x^{1+p} - \cdots - \binom{s}{s}x^{1+sp}\ =\ 1 - \sum_{i=0}^s\binom{s}{i}x^{1+ip}.$$
We have 
\begin{align*}
    u_{s,p}(x)\cdot \sum_{i=0}^{s-1}x^i(x^{sp}+1)^i&\ =\ (1 - x - x^{ps + 1})\frac{1-x^s(x^{sp}+1)^s}{1- x(x^{sp}+1)}\\
    & \ =\ 1 - x^s(x^{sp}+1)^s\\
    &\ =\ 1 - x^s\sum_{i=0}^s \binom{s}{i} x^{spi}\\
    &\ =\ 1 - \sum_{i=0}^s \binom{s}{i}(x^s)^{1+ ip}\ =\ v_{s, p}(x^s).
\end{align*}
Hence,  $u_{s,p}(x)$ divides $v_{s,p}(x^s)$. By Lemma \ref{relatepcseq}, $(b^{(s)}_{p, n})_{n=1}^\infty$ satisfies 
$v_{s,p}(x^s)$. 

Due to the power $s$ in $v_{s,p}(x^s)$, $v_{s,p}(x^s)$ gives a linear recurrence for terms in $(b^{(s)}_{p, n})_{n=1}^\infty$ whose indices are $s$ apart. It follows from Lemma \ref{subseq} that the sequence $(|\mathcal{B}_{p, n}|)_{n=1}^\infty$ satisfies $v_{s, p}(x)$. 
\end{proof}

\section{Schreier sets that do not admit multiples of a given number}\label{notmul}
In this section, given $u\ge 2$, we count Schreier sets that do not contain any multiple of $u$. This is the opposite of what was studied in \cite{CV}. First, we need a formula that gives exactly numbers that are not divisible by $u$.

\begin{lem}
    For $u\ge 2$, we have
    $$\left\{\left\lfloor\frac{un-1}{u-1}\right\rfloor\,:\, n\in \mathbb{N}\right\}\ =\ \{n\in\mathbb{N}\,:\, u\nmid n\}.$$
\end{lem}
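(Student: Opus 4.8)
The plan is to show the two sets coincide by proving that the map $n \mapsto \lfloor (un-1)/(u-1)\rfloor$ is a strictly increasing bijection from $\mathbb{N}$ onto the set of positive integers not divisible by $u$. Writing $g_u(n) = \lfloor (un-1)/(u-1)\rfloor$, I would first use the division algorithm to write $n-1 = (u-1)\lfloor (n-1)/(u-1)\rfloor + t$ with $0 \le t \le u-2$, since $un - 1 = (u-1)n + (n-1)$ gives $g_u(n) = n + \lfloor (n-1)/(u-1)\rfloor$. This already reduces everything to understanding $\lfloor (n-1)/(u-1)\rfloor$.

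\medskip

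The key steps are then: \textbf{(i) Range inclusion:} show $u \nmid g_u(n)$ for every $n$. From $g_u(n) = n + \lfloor (n-1)/(u-1)\rfloor$, write $n - 1 = (u-1)Q + t$ with $0 \le t \le u-2$, so $n = (u-1)Q + t + 1$ and $g_u(n) = (u-1)Q + t + 1 + Q = uQ + t + 1$ with $1 \le t+1 \le u-1$; hence $g_u(n) \equiv t+1 \not\equiv 0 \pmod u$. \textbf{(ii) Surjectivity onto the non-multiples:} given $m \in \mathbb{N}$ with $u \nmid m$, write $m = uQ + r$ with $1 \le r \le u-1$, and set $n = (u-1)Q + r$; then the computation in (i) run backwards shows $g_u(n) = m$. \textbf{(iii) Injectivity:} $g_u$ is strictly increasing since $g_u(n+1) - g_u(n) = 1 + (\lfloor n/(u-1)\rfloor - \lfloor (n-1)/(u-1)\rfloor) \ge 1 > 0$, so distinct $n$ give distinct values. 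Combining (i), (ii), (iii) yields that $g_u$ is a bijection $\mathbb{N} \to \{m \in \mathbb{N} : u \nmid m\}$, which is exactly the claimed set equality.

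\medskip

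I do not expect any serious obstacle here: the whole argument is elementary manipulation of the floor function, and the one computational identity doing the work is $un - 1 = (u-1)n + (n-1)$, which turns the awkward floor of a fraction with numerator $un-1$ into $n$ plus a floor with the much smaller numerator $n-1$. The only point requiring a little care is bookkeeping the residues: ensuring in (i) that $t+1$ lands in $\{1, \ldots, u-1\}$ (so it is genuinely a nonzero residue mod $u$), and in (ii) that the constructed $n$ is a positive integer, which holds because $Q \ge 0$ and $r \ge 1$. One could alternatively phrase the whole thing as a counting argument — among $1, \ldots, g_u(n)$ there are exactly $n$ non-multiples of $u$ — but the explicit bijection above is cleaner and self-contained, so that is the route I would take.
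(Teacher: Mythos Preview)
Your proposal is correct and follows essentially the same route as the paper: both write $n$ (or $n-1$) via the division algorithm with divisor $u-1$ and compute the floor explicitly to see that the output is $uQ+(\text{nonzero residue})$. Your version is a bit more explicit than the paper's (you spell out injectivity and surjectivity, and you first extract the clean identity $g_u(n)=n+\lfloor (n-1)/(u-1)\rfloor$), whereas the paper simply computes $\lfloor(un-1)/(u-1)\rfloor$ in the two cases $m=0$ and $1\le m\le u-2$ and asserts the conclusion.
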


\begin{proof}
    Write $n = (u-1)j + m$ with $j\ge 0$ and $0\le m\le u-2$. Then
    \begin{align*}
        \left\lfloor\frac{un-1}{u-1}\right\rfloor\ =\ \left\lfloor \frac{u((u-1)j + m)-1}{u-1}\right\rfloor&\ =\ uj+m+\left\lfloor \frac{m-1}{u-1}\right\rfloor\\
        &\ =\ \begin{cases}uj  - 1, &\mbox{ if }m = 0;\\
        uj + m, &\mbox{ if } 1\le m \le u-2.\end{cases}
    \end{align*}
    This shows that the formula $\lfloor (un-1)/(u-1)\rfloor$ gives exactly positive integers that are not divisible by $u$.
\end{proof}

We prove Theorem \ref{m3} using the same method of characteristic polynomials as in Section \ref{colored}. First, 
we find a formula for $|\mathcal{D}_{u, n}|$. A $k$-element in $\mathcal{D}_{u, n}$ is uniquely determined by $(k-1)$ elements in
$$\left\{\left\lfloor \frac{u(j_k-1)-1}{u-1}\right\rfloor, \ldots, \left\lfloor\frac{u(n-1)-1}{u-1}\right\rfloor\right\},$$
where $j_k$ is the smallest positive integer with
$$\frac{u(j_k-1)-1}{u-1}\ \ge\ k\ \Longleftrightarrow\ j_k\ = \ k+1 - \left\lfloor \frac{k-1}{u}\right\rfloor.$$
Hence, the number of $k$-element sets in $\mathcal{D}_{u,n}$ is
$$\binom{n-j_k+1}{k-1}\ =\ \binom{n-k+\left\lfloor \frac{k-1}{u}\right\rfloor}{k-1}$$
under the condition 
$$n-k+\left\lfloor \frac{k-1}{u}\right\rfloor \ \ge\ k-1\ \Longleftrightarrow\ k\ \le\ \frac{(n+1)u-1}{2u-1}.$$
Therefore,
\begin{equation}\label{ee1}|\mathcal{D}_{u,n}|\ =\ \sum_{k=1}^{\left\lfloor\frac{(n+1)u-1}{2u-1}\right\rfloor} \binom{n-k+\left\lfloor \frac{k-1}{u}\right\rfloor}{k-1}.\end{equation}

We now define parent sequences $(d_{u, n})_{n=1}^\infty$ so that for each fixed $u$, $(|\mathcal{D}_{u, n}|)_{n=1}^\infty$ is a periodic subsequence of $(d_{u,n})_{n=1}^\infty$. Let 
$$d_{u, 1}\ =\ d_{u, 2}\ =\ \cdots \ \ =\ d_{u,2u-1}\ =\ 1$$
and 
$$d_{u, n}\ =\ d_{u, n-u} + d_{u, n-2u+1}\mbox{ for }n\ge 2u.$$

For example, 
\begin{align*}
 (d_{2,n})_{n=1}^\infty: \quad &1, 1, 1, 2, 2, 3, 4, 5, 7, 9, 12, 16, 21, 28, 37, 49, 65, 86, 114,
151, 200, \\
&265, 351, \ldots;\\
 (d_{3,n})_{n=1}^\infty: \quad &1, 1, 1, 1, 1, 2, 2, 2, 3, 3, 4, 5, 5, 7, 8, 9, 12, 13, 16, 20, 22, 
28, 33, 38, 48, \\
&55, 66, 81, 93, 114, 136, 159, 195, 229,\ldots;\\
(d_{4,n})_{n=1}^\infty: \quad &1, 1, 1, 1, 1, 1, 1, 2, 2, 2, 2, 3, 3, 3, 4, 5, 5, 5, 7, 8, 8, 9, 12,
13, 13, 16, 20, 21,\\
&22, 28, 33, 34, 38, 48, 54, 56, 66, 81, 88, 94, 114, 135, 144, 160, 195, \ldots.\\
\end{align*}

\begin{prop}\label{l1}    For $u\ge 2$ and $n\in\mathbb{N}$, we have
    $$d_{u,n}\ =\ \sum_{i=0}^{\left\lfloor \frac{n-1}{u}\right\rfloor}\binom{\left\lfloor\frac{n+(u-1)i-u}{2u-1}\right\rfloor}{i}.$$
\end{prop}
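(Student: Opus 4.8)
The plan is to prove the identity by strong induction on $n$, in the same style as the proof of Proposition~\ref{formulaparentseq}. Write $f(n,i) := \left\lfloor \frac{n+(u-1)i-u}{2u-1}\right\rfloor$ and $R(n) := \sum_{i=0}^{\lfloor (n-1)/u\rfloor}\binom{f(n,i)}{i}$, so the goal is $d_{u,n}=R(n)$ for all $n\ge 1$. For the base cases $1\le n\le 2u-1$, where $d_{u,n}=1$, I would split into two ranges. If $1\le n\le u$, then $\lfloor (n-1)/u\rfloor = 0$, so $R(n)=\binom{f(n,0)}{0}=1$. If $u+1\le n\le 2u-1$, then $\lfloor(n-1)/u\rfloor=1$; the $i=0$ term is again $1$, while the $i=1$ term is $\binom{\lfloor (n-1)/(2u-1)\rfloor}{1}=\binom{0}{1}=0$ since $0<n-1<2u-1$, so $R(n)=1$. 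These are short floor computations.

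For the inductive step $n\ge 2u$, it suffices to show $R(n)=R(n-u)+R(n-2u+1)$; the claim then follows from the defining recurrence $d_{u,n}=d_{u,n-u}+d_{u,n-2u+1}$ together with the induction hypothesis, since $n-u\ge u\ge 2$ and $n-2u+1\ge 1$ are both positive and less than $n$. The engine is the pair of exact identities
$$f(n,i)-1\ =\ f(n-2u+1,i)\ =\ f(n-u,i-1),$$
both valid because the relevant numerators differ by exactly $2u-1$, so nothing is lost to rounding. I would also record that for $n\ge 2u$ one has $f(n,i)\ge 1$ for every $i\ge 1$ (indeed $f(n,1)=\lfloor (n-1)/(2u-1)\rfloor\ge 1$ and $f(n,\cdot)$ is nondecreasing in $i$), which lets Pascal's rule $\binom{f(n,i)}{i}=\binom{f(n,i)-1}{i}+\binom{f(n,i)-1}{i-1}$ be applied term by term.

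Writing $N:=\lfloor(n-1)/u\rfloor$ and peeling off the $i=0$ term of $R(n)$, I obtain
$$R(n)\ =\ 1+\sum_{i=1}^{N}\binom{f(n,i)-1}{i}+\sum_{i=1}^{N}\binom{f(n,i)-1}{i-1}.$$
In the last sum, substituting $f(n,i)-1=f(n-u,i-1)$ and reindexing by $j=i-1$ gives $\sum_{j=0}^{N-1}\binom{f(n-u,j)}{j}$, and since $N-1=\lfloor(n-u-1)/u\rfloor$ this is exactly $R(n-u)$ — a perfect match, with no spillover terms. In the middle sum, substituting $f(n,i)-1=f(n-2u+1,i)$ and absorbing the leading $1=\binom{f(n-2u+1,0)}{0}$ gives $\sum_{i=0}^{N}\binom{f(n-2u+1,i)}{i}$. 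The hard part — really the only delicate point — is checking that this equals $R(n-2u+1)=\sum_{i=0}^{\lfloor(n-2u)/u\rfloor}\binom{f(n-2u+1,i)}{i}$, i.e., that every term with $\lfloor(n-2u)/u\rfloor<i\le N$ vanishes. With $m:=n-2u+1\ge 1$ these indices are all $\ge 1$, hence $f(m,i)\ge 0$, so the term is $0$ iff $f(m,i)<i$; since $\lfloor x\rfloor<i\iff x<i$ for integer $i$, this is equivalent to $i>(m-u)/u$, which holds because $i>\lfloor(m-1)/u\rfloor\ge (m-1)/u-(u-1)/u=(m-u)/u$. This completes the inductive step, and throughout only the standard convention $\binom{m}{0}=1$ is needed, so no nonstandard binomial conventions enter.
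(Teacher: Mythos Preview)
Your proof is correct and follows essentially the same strategy as the paper: verify the initial values directly, then show the closed form satisfies the defining recurrence by splitting via Pascal's rule and matching the two resulting sums to $R(n-u)$ and $R(n-2u+1)$. The only difference is cosmetic: where the paper writes $n=qu+r$ and checks the two candidate tail indices $i=q-1$ and $i=q$ by explicit floor computations, you dispose of all extra terms at once via the inequality $i>\lfloor(m-1)/u\rfloor\ge (m-u)/u$, which is a slightly cleaner way to reach the same conclusion.
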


\begin{proof}
    Let $u\ge 2$. 
    If $1\le n\le u$, then
    $\left\lfloor \frac{n-1}{u}\right\rfloor = 0$, and
    $$\sum_{i=0}^{\left\lfloor \frac{n-1}{u}\right\rfloor}\binom{\left\lfloor\frac{n+(u-1)i-u}{2u-1}\right\rfloor}{i}\ =\  \binom{\left\lfloor\frac{n-u}{2u-1}\right\rfloor}{0}\ =\ 1.
    $$
    
    If $u+1\le n\le 2u-1$, then
    $$1\ \le\ \frac{n-1}{u}\ \le\ \frac{2u-2}{u}\ =\ 2 - \frac{2}{u}\ <\ 2\ \Longrightarrow\ \left\lfloor\frac{n-1}{u}\right\rfloor \ =\ 1,$$
    and
      \begin{align*}\sum_{i=0}^{\left\lfloor \frac{n-1}{u}\right\rfloor}\binom{\left\lfloor\frac{n+(u-1)i-u}{2u-1}\right\rfloor}{i}&\ =\ \sum_{i=0}^{1}\binom{\left\lfloor\frac{n+(u-1)i-u}{2u-1}\right\rfloor}{i}\\
    &\ =\ \binom{\left\lfloor\frac{n-u}{2u-1}\right\rfloor}{0} + \binom{\left\lfloor\frac{n-1}{2u-1}\right\rfloor}{1}\\
    &\ =\ \binom{0}{0} + \binom{0}{1}\ =\ 1.
    \end{align*}

    It remains to verify that $d_{u, n} = d_{u, n-u} + d_{u, n-2u+1}$ for all $n\ge 2u$, i.e., 
    $$\sum_{i=0}^{\left\lfloor \frac{n-1}{u}\right\rfloor}\binom{\left\lfloor\frac{n+(u-1)i-u}{2u-1}\right\rfloor}{i}\\
    \ =\ \sum_{i=0}^{\left\lfloor \frac{n-1}{u}\right\rfloor-1}\binom{\left\lfloor\frac{n+(u-1)i-2u}{2u-1}\right\rfloor}{i} + \sum_{i=0}^{\left\lfloor \frac{n}{u}\right\rfloor-2}\binom{\left\lfloor\frac{n+(u-1)i-u}{2u-1}\right\rfloor-1}{i}.$$

By Pascal's rule, we have
\begin{align*}
&\sum_{i=0}^{\left\lfloor \frac{n-1}{u}\right\rfloor}\binom{\left\lfloor\frac{n+(u-1)i-u}{2u-1}\right\rfloor}{i}\\
\ =\ &1  + \sum_{i = 1}^{\left\lfloor \frac{n-1}{u}\right\rfloor}\binom{\left\lfloor\frac{n+(u-1)i-u}{2u-1}\right\rfloor}{i}\\
\ =\  &1 + \underbrace{\sum_{i=1}^{\left\lfloor \frac{n-1}{u}\right\rfloor} \binom{\left\lfloor\frac{n+(u-1)i-u}{2u-1}\right\rfloor-1}{i}}_{=: I(u, n)} + \underbrace{\sum_{i=1}^{\left\lfloor \frac{n-1}{u}\right\rfloor} \binom{\left\lfloor\frac{n+(u-1)i-u}{2u-1}\right\rfloor-1}{i-1}}_{=: II(u,n)}.
\end{align*}
Reindexing $II(u, n)$ gives 
$$
II(u, n) \ =\ \sum_{i=0}^{\left\lfloor \frac{n-1}{u}\right\rfloor - 1} \binom{\left\lfloor\frac{n+(u-1)(i+1)-u-2u+1}{2u-1}\right\rfloor}{i} = \sum_{i=0}^{\left\lfloor \frac{n-1}{u}\right\rfloor -1} \binom{\left\lfloor\frac{n+(u-1)i-2u}{2u-1}\right\rfloor}{i}.
$$
Hence, it suffices to show that
$$\underbrace{\sum_{i=1}^{\left\lfloor \frac{n-1}{u}\right\rfloor} \binom{\left\lfloor\frac{n+(u-1)i-u}{2u-1}\right\rfloor-1}{i}}_{=: I(u, n)}\ =\ \sum_{i=1}^{\left\lfloor \frac{n}{u}\right\rfloor-2}\binom{\left\lfloor\frac{n+(u-1)i-u}{2u-1}\right\rfloor-1}{i},$$
which means that
$$\binom{\left\lfloor \frac{n+(u-1)i - u}{2u-1}\right\rfloor - 1}{i}\ =\ 0, \mbox{ for }\left\lfloor \frac{n}{u}\right\rfloor - 1 \le i\le \left\lfloor \frac{n-1}{u}\right\rfloor.$$
Write $n=qu+r$, where $q\ge 2$ and $0\le r \le u-1$. Then 
$$
\left\lfloor \frac{n}{u}\right\rfloor - 1 \ =\ q - 1\quad \mbox{ and }\quad \left\lfloor \frac{n-1}{u} \right\rfloor\ =\
\begin{cases}
    q, &\mbox{if } r \ge 1;\\
    q-1, & \mbox{if } r = 0.
\end{cases}
$$

Case 1: if $i=q-1$, then 
$$
\left\lfloor \frac{n + (u-1)(q-1) - u}{2u-1} \right\rfloor
\ =\ \left\lfloor \frac{(2u-1)q + r - (2u-1)}{2u-1} \right\rfloor
\ =\ q-1.
$$
Hence $$\binom{\left\lfloor \frac{n+(u-1)i-u}{2u-1} \right\rfloor-1}{q-1}\ =\ 0.$$

Case 2: if $i = q$, then
$$
\left\lfloor \frac{n + (u-1)q - u}{2u-1} \right\rfloor
\ =\ \left\lfloor \frac{(2u-1)q+r - u}{2u-1} \right\rfloor
\ =\  q-1.
$$
Thus
$$
\binom{\left\lfloor \frac{n+(u-1)i-u}{2u-1} \right\rfloor-1}{q}\ =\ 0.
$$
This completes our proof. 
\end{proof}

\begin{prop}\label{ep1}
For each $u\ge 2$, the sequence $(|\mathcal{D}_{u,n}|)_{n=1}^\infty$ is a periodic subsequence of $(d_{u,n})_{n=1}^\infty$. In particular,
$$|\mathcal{D}_{u,n}| \ =\ d_{u, un-(u-1)}, \mbox{ for all }n\in \mathbb{N}.$$
\end{prop}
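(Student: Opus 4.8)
The plan is to deduce Proposition~\ref{ep1} by comparing the two closed forms already in hand: equation~\eqref{ee1} for $|\mathcal{D}_{u,n}|$ and Proposition~\ref{l1} for the parent sequence $(d_{u,m})$. Substituting $m=un-(u-1)$ into the formula of Proposition~\ref{l1}, the outer floor simplifies to $\lfloor (m-1)/u\rfloor=\lfloor u(n-1)/u\rfloor=n-1$ and the inner argument becomes $m+(u-1)i-u=u(n-2)+1+(u-1)i$; after reindexing \eqref{ee1} by $i=k-1$ (so that the quantity $j_k=k+1-\lfloor (k-1)/u\rfloor$ turns into $i+1-\lfloor i/u\rfloor$), Proposition~\ref{ep1} becomes equivalent to the binomial identity
\begin{equation}\label{target}
\sum_{i\ge 0}\binom{n-1-i+\left\lfloor\tfrac{i}{u}\right\rfloor}{i}\ =\ \sum_{i\ge 0}\binom{\left\lfloor\tfrac{u(n-2)+1+(u-1)i}{2u-1}\right\rfloor}{i},
\end{equation}
where on each side only finitely many summands are nonzero: the stated upper limits in \eqref{ee1} and Proposition~\ref{l1} are precisely the largest indices beyond which the binomial coefficient vanishes, by the inequalities already recorded.

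I would then prove \eqref{target} by induction on $n$, coupled with the recurrence its right-hand side inherits. Writing $R(n)$ for the right-hand side of \eqref{target}, the defining recurrence $d_{u,m}=d_{u,m-u}+d_{u,m-2u+1}$ gives, for $n\ge 3$,
\[
R(n)=d_{u,\,un-(u-1)}=d_{u,\,u(n-1)-(u-1)}+d_{u,\,un-3u+2}=R(n-1)+d_{u,\,un-3u+2},
\]
so, after invoking the inductive hypothesis $R(n-1)=|\mathcal{D}_{u,n-1}|$, the inductive step reduces to $|\mathcal{D}_{u,n}|-|\mathcal{D}_{u,n-1}|=d_{u,\,un-3u+2}$; here I would expand the left side by applying Pascal's rule termwise to \eqref{ee1} and the right side by Proposition~\ref{l1}. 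It is cleanest to run this argument on the enlarged family $D_c(n):=d_{u,\,un-(u-1)+c}$ for $0\le c\le u-1$: unfolding the defining recurrence of $d$ once shows that the $D_c(n)$ satisfy the coupled system $D_c(n)=D_c(n-1)+D_{c+1}(n-2)$ for $0\le c\le u-2$ and $D_{u-1}(n)=D_{u-1}(n-1)+D_0(n-1)$, and one checks by Pascal's rule that a suitable family of \eqref{ee1}-type sums satisfies the same system; agreement of the (easily computed) initial segments then forces $D_0(n)=|\mathcal{D}_{u,n}|$, which is \eqref{target}. Alternatively, one can verify \eqref{target} directly by a case analysis on the residue of $n$ modulo $2u-1$ together with repeated use of Pascal's rule, exactly in the style of the three-case computation in the proof of Proposition~\ref{l1}; this is the kind of technical verification that may be deferred to Section~\ref{apen}.

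The main obstacle --- and the reason this is genuinely more work than the corresponding step in the colored case (Lemma~\ref{subseq}, which was a one-line reindexing) --- is that \eqref{target} is \emph{not} a term-by-term identity: after any reindexing the two sides are sums of binomial coefficients with different tops (for instance at $u=3$, $n=4$ the summands are $1,2,0,\dots$ on the left and $1,1,1,\dots$ on the right, both summing to $3$), so no matching of individual summands is possible. The crux is therefore controlling how the two floor functions $\lfloor i/u\rfloor$ and $\lfloor\tfrac{\cdot}{2u-1}\rfloor$ jump as $n$ runs through a full period of length $2u-1$ --- equivalently, as the indices of $d_{u,\cdot}$ that appear cross multiples of $u$ and of $2u-1$. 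Once the right window of residue classes is fixed, each case collapses under iterated Pascal's rule, but organizing this bookkeeping cleanly is where I expect the bulk of the effort to lie.
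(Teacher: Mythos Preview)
Your reduction to the binomial identity \eqref{target} is correct and is exactly where the paper begins as well. The paper, however, does \emph{not} proceed by induction on $n$ or by a coupled system. Instead it attacks the identity directly: writing $f(k)$ and $g(j)$ for the summands on the two sides, it singles out the special indices $\chi(\ell)=u(\ell+1)-1$ on the $f$-side and $\xi(\ell)=n-(2u-1)(\ell+1)$ on the $g$-side, observes that $f(\chi(\ell))=g(\xi(\ell))$ term by term, and then shows that each block of $u-1$ consecutive $f$-terms between $\chi(\ell)$ and $\chi(\ell+1)$ equals the corresponding block of $2u-2$ consecutive $g$-terms between $\xi(\ell+1)$ and $\xi(\ell)$, by pairing the $g$-terms two at a time via Pascal's rule. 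The two boundary blocks are relegated to the appendix and require separate residue computations.

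Your coupled-system route is a genuinely different and, in fact, cleaner argument once the ``suitable family'' is named. The natural choice is
\[
E_c(n)\ :=\ \sum_{j\ge 0}\binom{n-1-j+\left\lfloor\tfrac{j+c}{u}\right\rfloor}{j},\qquad 0\le c\le u-1,
\]
so that $E_0(n)=|\mathcal{D}_{u,n}|$ by reindexing \eqref{ee1}. A single application of Pascal's rule gives $E_c(n)-E_c(n-1)=E_{c+1}(n-2)$, and the periodicity $\lfloor(j+u)/u\rfloor=1+\lfloor j/u\rfloor$ closes the loop with $E_u(n-2)=E_0(n-1)$; this matches your system for the $D_c$ exactly. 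What remains is a finite check of initial values across all $c$, which is routine. The trade-off: the paper's block decomposition is self-contained in one shot but spawns three separate Pascal computations (one in the text, two in the appendix); your approach hides all of that in a one-line Pascal step at the cost of tracking $u$ auxiliary sequences and verifying their initial agreement. Your diagnosis of the obstacle --- that \eqref{target} is not term-by-term --- is exactly right, and it is what forces both proofs to do real work.
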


\begin{proof}Fix $u\ge 2$ and $n\in \mathbb{N}$.
    Thanks to \eqref{ee1} and Proposition \ref{l1}, it suffices to prove that
    $$\sum_{k=0}^{\left\lfloor\frac{nu-u}{2u-1}\right\rfloor}\underbrace{\binom{\left\lfloor \frac{un - (u-1)k}{u}\right\rfloor-1}{k}}_{=: f(k)}\ =\ \sum_{j=0}^{n-1}\underbrace{\binom{\left\lfloor\frac{un+(u-1)j}{2u-1}\right\rfloor-1}{j}}_{=:g(j)}.$$

    If $n = 1$, both sides equal $1$. Suppose that $n\ge 2$. For $0\le j\le n-1$, we consider $j$ of the form
    $$j \ =\ n - (2u-1)(\ell+1)\ =:\ \xi(\ell).$$
    Then 
    $$0\ \le\ \ell\ \le\ \underbrace{\left\lfloor\frac{n-2u}{2u-1} + \frac{1}{2u-1}\right\rfloor}_{=:\ell_0}\ <\ \frac{n-2u}{2u-1} + \frac{1}{u}.$$
    For $\ell$ in the above range, if $k = \chi(\ell) := u(\ell+1)-1$, then $0\le k\le \left\lfloor\frac{nu-u}{2u-1}\right\rfloor$.
    We have 
    \begin{align}\label{ee2}g(\xi(\ell))\ =\ g(n-(2u-1)(\ell+1))&\ =\ \binom{n + \ell - (\ell+1)u}{n-(2u-1)(\ell+1)}\nonumber\\
    &\ =\ \binom{n + \ell - (\ell+1)u}{(\ell+1)u-1}\nonumber\\
    &\ =\ f(u\ell + (u-1))\ =\ f(\chi(\ell)).
    \end{align}
    Write
    \begin{align*}\sum_{k=0}^{\left\lfloor\frac{nu-u}{2u-1}\right\rfloor} f(k)\ =\ &\sum_{k=0}^{\chi(0)-1}f(k) + f(\chi(0)) + \sum_{k = \chi(0)+1}^{\chi(1) - 1}f(k) + f(\chi(1)) + \cdots + \\ 
    &\sum_{k = \chi(\ell_0-1)+1}^{\chi(\ell_0) - 1}f(k) + f(\chi(\ell_0)) + \sum_{k=\chi(\ell_0)+1}^{\left\lfloor\frac{nu-u}{2u-1}\right\rfloor}f(k).
    \end{align*}
    and
    \begin{align*}
    \sum_{j=0}^{n-1} g(j)\ =\ &\sum_{j=0}^{\xi(\ell_0)-1}g(j) + g(\xi(\ell_0)) + \sum_{j = \xi(\ell_0)+1}^{\xi(\ell_0-1) - 1}g(j) + g(\xi(\ell_0-1)) + \cdots + \\ 
    &\sum_{j = \xi(1)+1}^{\xi(0) - 1}g(j) + g(\xi(0)) + \sum_{j=\xi(0)+1}^{n-1}g(j).\end{align*}
    Note that \eqref{ee2} gives $f(\chi(\ell)) = g(\xi(\ell))$ for each $\ell\in [0, \ell_0]$.

    Next, we prove that for $\ell\in [0, \ell_0-1]$,
    \begin{equation}\label{ee3}\sum_{k = \chi(\ell)+1}^{\chi(\ell+1) - 1}f(k)\ =\ \sum_{j = \xi(\ell+1)+1}^{\xi(\ell)-1}g(j).\end{equation}
    To do so, note that the right sum of \eqref{ee3} has 
    $2u-2$ terms, while the left sum has $u-1$ terms. To prove \eqref{ee3}, we use the Pascal's rule to combine every two consecutive terms in the right sum. On the one hand, we have
    \begin{align*}
        &\sum_{j = \xi(\ell+1)+1}^{\xi(\ell)-1}g(j)\\
        \ =\ &\sum_{i = 1}^{u-1}(g(\xi(\ell+1)+2i-1) + g(\xi(\ell+1)+2i))\\
        \ =\ &\sum_{i=1}^{u-1}\left(\binom{n-(u-1)(\ell+2)+i-2}{\xi(\ell+1)+2i-1} + \binom{n-(u-1)(\ell+2)+i-2}{\xi(\ell+1)+2i}\right)\\
        \ =\ &\sum_{i=1}^{u-1}\binom{n-(u-1)(\ell+2)+i-1}{\xi(\ell+1)+2i}\ =\ \sum_{i=1}^{u-1}\binom{n-(u-1)(\ell+2)+i-1}{u(\ell+2)-i-1}.
    \end{align*}
    On the other hand, 
    \begin{align*}
        \sum_{k=\chi(\ell) + 1}^{\chi(\ell+1)-1} f(k)&\ =\ \sum_{i=1}^{u-1} f(\chi(\ell+1)-i)\\
        &\ =\ \sum_{i=1}^{u-1}\binom{\left\lfloor \frac{un-(u-1)(\chi(\ell+1)-i)}{u}\right\rfloor-1}{\chi(\ell+1)-i}\\
        &\ =\ \sum_{i=1}^{u-1}\binom{n-(u-1)(\ell+2)+i-1}{u(\ell+2)-i-1}.
    \end{align*}
Therefore, \eqref{ee3} holds. 

    From \eqref{ee2} and \eqref{ee3}, we have shown that 
    $$\sum_{k = \chi(0)}^{\chi(\ell_0)}f(k)\ =\ \sum_{j = \xi(\ell_0)}^{\xi(0)} g(j).$$
    It remains to verify that
    \begin{equation}\label{ee4}
        \sum_{k=0}^{\chi(0)-1} f(k) \ =\ \sum_{j= \xi(0) + 1}^{n-1} g(j)
    \end{equation}
    and
    \begin{equation}\label{ee5}
       \sum_{k = \chi(\ell_0) + 1}^{\left\lfloor\frac{nu-u}{2u-1}\right\rfloor}f(k)\ =\  \sum_{j = 0}^{\xi(\ell_0) - 1}g(j).
    \end{equation}
    The proofs of \eqref{ee4} and \eqref{ee5} use a similar idea to the proof of \eqref{ee3}, so we move it to the appendix. 
\end{proof}

\begin{proof}[Proof of Theorem \ref{m3}]
The first statement follows from the well-known formula for $F_n$:
\begin{equation}\label{oot}F_n\ =\ \sum_{i=0}^{\left\lfloor \frac{n-1}{2}\right\rfloor}\binom{n-i-1}{i}.\end{equation}
For a proof of \eqref{oot} using tilings, see \cite[Identity 4]{BQ}, or if we assume Zeckendorf's Theorem\footnote{Zeckendorf's Theorem states that every positive integer can be written uniquely as a sum of nonadjacent Fibonacci numbers in $(F_n)_{n\ge 2}$.}, \cite[(2.5)]{KKMW} gives another proof. 
Combining \eqref{ee1} with \eqref{oot}, we need to show the equality
\begin{equation}\label{ee10}\sum_{k=1}^{\left\lfloor\frac{(n+1)u-1}{2u-1}\right\rfloor} \binom{n-k+\left\lfloor \frac{k-1}{u}\right\rfloor}{k-1}\ =\ \sum_{i=0}^{\left\lfloor \frac{n-1}{2}\right\rfloor}\binom{n-i-1}{i}\mbox{ for }n\le 2u-1,\end{equation}
whose proof is in Section \ref{apen}.

By definition, the sequence $(d_{u,n})_{n=1}^\infty$ satisfies the polynomial
$$p_u(x)\ :=\ 1-x^{u}-x^{2u-1}.$$
Theorem \ref{m3} states that $(|\mathcal{D}_{k, n}|)_{n=1}^\infty$ satisfies 
$$
        q_u(x) \ :=\ 1+\sum_{i=1}^{u} \binom{u}{i} (-x)^{i} - x^{2u-1}\ =\ (1-x)^u - x^{2u-1}.
$$
By Lemma 3.4 and Proposition \ref{ep1}, it suffices to verify that 
$p_u(x)$ divides $q_u(x^u)$. We have
\begin{align*}
&(1-x^u-x^{2u-1})\left(\sum_{j=0}^{u-1}(1-x^u)^{u-1-j}x^{j(2u-1)}\right)\\
\ =\ &(1-x^u-x^{2u-1}) \frac{(1-x^u)^{u}-x^{u(2u-1)}}{1-x^u-x^{2u-1}}\\
\ =\ &(1-x^u)^{u} - x^{u(2u-1)} \ =\ q_u(x^u). 
\end{align*}
\end{proof}

\section{Further investigations}\label{future}
One may consider other setups for $p, q$ and $(s_{n,i})_{i=1}^n$ in Problem \ref{mainprob} and determine what recurrences they give. Interested readers may also study the more general condition $q\min F\ge p|F|$ as in \cite{BCF}. Define
$$\mathcal{A}^{(s)}_{p, q, n}\ :=\ \{F\subset \{\underbrace{1, \ldots, 1}_{s}, \ldots, \underbrace{n-1, \ldots, n-1}_{s}, n\}\,:\,n\in F\mbox{ and }q\min F\ge p|F|\}.$$
Similar to \eqref{e2}, we have a formula for $|\mathcal{A}^{(s)}_{n, p, q}|$:
$$|\mathcal{A}^{(s)}_{p, q, n}|\ =\ \sum_{k=1}^{\left\lfloor \frac{nsq + q}{sp + q}\right\rfloor}\sum_{j=0}^{\left\lfloor \frac{k-1}{s+1}\right\rfloor}(-1)^j\binom{n-\left\lceil \frac{pk}{q}\right\rceil}{j}\binom{n - \left\lceil \frac{pk}{q}\right\rceil + k - 1- (s+1)j-1}{n-\left\lceil \frac{pk}{q}\right\rceil -1}.$$

Below are the data and conjectured recurrences we gather
\begin{enumerate}
    \item $(|\mathcal{A}^{(2)}_{1, 2, n}|)_{n=1}^\infty$: $1, 2, 4, 9, 19, 41, 88, 189, 406, 872, 1873, 4023, 8641, \ldots$ with
    $$a_n \ =\ a_{n-1} + 2a_{n-2} + a_{n-3};$$
    \item $(|\mathcal{A}^{(2)}_{1, 3, n}|)_{n=1}^\infty$: $1, 3, 6, 13, 31, 73, 169, 392, 912, 2121, 4930, 11460, 26642,\ldots$ with
    $$a_n \ =\ 3a_{n-1} - 3a_{n-2} + 4a_{n-3}-2a_{n-4}+a_{n-5};$$
    \item $(|\mathcal{A}^{(2)}_{1,4, n}|)_{n=1}^\infty$: 
    $$1, 3, 8, 18, 41, 100, 250, 617, 1501, 3643, 8877, 21689, 52984, 129303,\ldots\mbox{ with}$$ 
    $$a_n = 3a_{n-1} - 3a_{n-2} + 3a_{n-3} + 2a_{n-4} + a_{n-5};$$
    \item   $(|\mathcal{A}^{(2)}_{1,5, n}|)_{n=1}^\infty$: 
    $$1, 3, 9, 23, 54, 127, 314, 808, 2090, 5326, 13379, 33460, 83979, 
211847,\ldots\mbox{ with}$$ 
    $$a_n = 5a_{n-1} - 10a_{n-2} + 10a_{n-3} - 4a_{n-5} + a_{n-6} + a_{n-7};$$
    \item $(|\mathcal{A}^{(2)}_{3, 2, n}|)_{n=1}^\infty$: $0,1,1,2, 3, 5, 8, 14, 24, 40, 66,110,185, 311, 521, 871,\ldots$ with
    $$a_n = a_{n-1}+2a_{n-4} + a_{n-5} + a_{n-6} + a_{n-7}.$$
\end{enumerate}

\begin{prob}\label{fi}\normalfont
For $(s, p, q)\in \mathbb{N}^3$, find the recurrence for $(|\mathcal{A}^{(s)}_{p, q, n}|)_{n=1}^\infty$.    
\end{prob}

Observe that \cite[Theorem 1.1]{BCF} and Theorem \ref{m1} solve Problem \ref{fi} when $s=1$ and $q = 1$, respectively.

Last but not least, we are interested in nonlinear Schreier-type conditions $(\min F)^q \ge |F|^p$ as started in \cite{CIMSZ}.

\appendix
\section{Technical proofs}\label{apen}

\begin{proof}[Proof of \eqref{ee4}] We have
    \begin{align*}
        \sum_{k=0}^{\chi(0)-1}f(k)&\ =\ \sum_{k=0}^{u-2}\binom{\left\lfloor \frac{un - (u-1)k}{u}\right\rfloor-1}{k}\\
        &\ =\ \sum_{k=0}^{u-2}\binom{n-k-1+\left\lfloor \frac{k}{u}\right\rfloor}{k}\ =\ \sum_{k=0}^{u-2}\binom{n-k-1}{k},
    \end{align*}
    while
    \begin{align*}
        &\sum_{j = \xi(0)+1}^{n-1}g(j)\\
        \ =\ &\sum_{j=n-2u+2}^{n-1}\binom{\left\lfloor\frac{un+(u-1)j}{2u-1}\right\rfloor - 1}{j}\\
        \ =\ &\sum_{j = n-2u+2}^{n-3}\binom{\left\lfloor\frac{un+(u-1)j}{2u-1}\right\rfloor - 1}{j} + 1\\
        \ =\ &\sum_{i=1}^{u-2}\left(\binom{\left\lfloor\frac{un+(u-1)(n-2u+2i)}{2u-1}\right\rfloor - 1}{n-2u+2i} +
        \binom{\left\lfloor\frac{un+(u-1)(n-2u+2i+1)}{2u-1}\right\rfloor - 1}{n-2u+2i+1}\right) + 1\\
        \ =\ &\sum_{i=1}^{u-2}\left(\binom{n-u+i-1 + \left\lfloor \frac{u-i}{2u-1}\right\rfloor}{n-2u+2i} + \binom{n-u+i+\left\lfloor\frac{-i}{2u-1} \right\rfloor}{n-2u+2i+1}\right) + 1\\
        \ =\ &\sum_{i=1}^{u-2}\left(\binom{n-u+i-1}{n-2u+2i} + \binom{n-u+i-1}{n-2u+2i+1}\right) + 1\\
        \ =\ &\sum_{i=1}^{u-2}\binom{n-u+i}{n-2u+2i+1} + 1\\
        \ =\ &\sum_{i=1}^{u-2}\binom{n-u+i}{u-i-1}+1\\
        \ =\ &\sum_{k=0}^{u-2}\binom{n-k-1}{k}\mbox{ by setting } u = k+i+1.
    \end{align*}
    Hence, \eqref{ee4} holds. 
\end{proof}

\begin{proof}[Proof of \eqref{ee5}]
    We have
    $$\xi(\ell_0)-1\ =\ n-1 - (2u-1)\left\lfloor\frac{n}{2u-1}\right\rfloor\mbox{ and }\chi(\ell_0) + 1 \ =\ u\left\lfloor\frac{n}{2u-1}\right\rfloor.$$
    Hence, \eqref{ee5} is the same as
    \begin{equation}\label{eee6}\sum_{k = u\left\lfloor\frac{n}{2u-1}\right\rfloor}^{\left\lfloor\frac{nu-u}{2u-1}\right\rfloor}\binom{\left\lfloor\frac{un-(u-1)k}{u}\right\rfloor-1}{k}\ =\ \sum_{j=0}^{n-1-(2u-1)\left\lfloor\frac{n}{2u-1}\right\rfloor}\binom{\left\lfloor\frac{un+(u-1)j}{2u-1}\right\rfloor - 1}{j}.\end{equation}
    Write $n = (2u-1)s + t$ with $s\ge 0$ and $0\le t\le 2u-2$. If $t = 0$, both sides of \eqref{eee6} equals $0$. Assume that $1\le t\le 2u-2$ and  rewrite \eqref{eee6} as
    \begin{equation}\label{eee7}\sum_{k = us}^{us + \left\lfloor\frac{(t-1)u}{2u-1}\right\rfloor}\binom{2us + t - k - 1}{k}\ =\ \sum_{j=0}^{t-1}\binom{su + \left\lfloor \frac{tu + (u-1)j}{2u-1}\right\rfloor - 1}{j}.\end{equation}

    Case 1: $t = 2r + 1$ for some $r\ge 0$. Then $r\le u-2$. We have
    \begin{align*}
        &\sum_{j=0}^{t-1}\binom{su + \left\lfloor \frac{tu + (u-1)j}{2u-1}\right\rfloor - 1}{j}\\
        \ =\ &\sum_{j=0}^{2r}\binom{su + \left\lfloor \frac{(2r+1)u + (u-1)j}{2u-1} \right\rfloor-1}{j}\\
        \ =\ &1 + \sum_{i=1}^r\left(\binom{su + \left\lfloor \frac{(2r+1)u+(u-1)(2i-1)}{2u-1}\right\rfloor-1}{2i-1} + \binom{su + \left\lfloor \frac{(2r+1)u+(u-1)2i}{2u-1}\right\rfloor - 1}{2i}\right)\\
        \ =\ &1 + \sum_{i=1}^r\left(\binom{su + r + i + \left\lfloor \frac{r-i+1}{2u-1}\right\rfloor-1}{2i-1} + \binom{su + r + i + \left\lfloor \frac{r-i+u}{2u-1}\right\rfloor - 1}{2i}\right)\\
        \ =\ &1 + \sum_{i=1}^r\left(\binom{su + r + i-1}{2i-1} + \binom{su + r + i  - 1}{2i}\right)\\
        \ =\ &1 + \sum_{i=1}^r\binom{su + r+ i}{2i}
    \end{align*}
    and
    \begin{align*}
    \sum_{k = us}^{us + \left\lfloor\frac{(t-1)u}{2u-1}\right\rfloor}\binom{2us + t - k - 1}{k}&\ =\ \sum_{k = us}^{us + r}\binom{2us + 2r - k}{k}\\
    &\ =\ \sum_{k=us}^{us + r}\binom{2us + 2r - k}{2us + 2r - 2k}\\
    &\ =\ \sum_{j = 0}^r\binom{us + r + j}{2j}\mbox{ by setting }j = r - k + us.
    \end{align*}
    Hence, \eqref{eee6} holds in this case. 

    Case 2: $t = 2r$ for $r\ge 1$. Then $r\le u-1$. We have
    \begin{align*}
        &\sum_{j=0}^{t-1}\binom{su + \left\lfloor \frac{2ru + (u-1)j}{2u-1}\right\rfloor - 1}{j}\\
        \ =\ &\sum_{j=0}^{2r-1}\binom{su + \left\lfloor \frac{2ru + (u-1)j}{2u-1}\right\rfloor-1}{j}\\
        \ =\ &\sum_{i = 0}^{r-1}\left(\binom{su + \left\lfloor \frac{2ru + (u-1)2i}{2u-1}\right\rfloor - 1}{2i} + \binom{su + \left\lfloor \frac{2ru + (u-1)(2i+1)}{2u-1}\right\rfloor-1}{2i+1}\right)\\
        \ =\ &\sum_{i = 0}^{r-1}\left(\binom{su + r + i + \left\lfloor \frac{r-i}{2u-1}\right\rfloor - 1}{2i} + \binom{su + r + i + \left\lfloor \frac{r + u - i - 1}{2u-1}\right\rfloor-1}{2i+1}\right)\\
        \ =\ &\sum_{i = 0}^{r-1}\left(\binom{su + r + i- 1}{2i} + \binom{su + r + i -1}{2i+1}\right)\\
        \ =\ &\sum_{i= 0}^{r-1}\binom{su + r+ i}{2i+1}
    \end{align*}
    and 
    \begin{align*}
        \sum_{k = us}^{us + \left\lfloor\frac{(t-1)u}{2u-1}\right\rfloor}\binom{2us + t - k - 1}{k}&\ =\ \sum_{k = us}^{us + r-1} \binom{2us + 2r - k - 1}{k}\\
        &\ =\ \sum_{k=us}^{us + r - 1}\binom{2us + 2r - k - 1}{2us + 2r - 2k - 1}\\
        &\ =\ \sum_{j=0}^{r-1}\binom{us + r + j}{2j+1}\mbox{ by setting }j = us + r -k - 1.
    \end{align*}
    Thus, \eqref{eee6} holds in this case as well. 
\end{proof}

\begin{proof}[Proof of \eqref{ee10}]
    Since $n\le 2u-1$, 
    $$\frac{k-1}{u}\ \le\ \frac{\frac{(n+1)u-1}{2u-1}-1}{u}\ =\ \frac{(n-1)u}{u(2u-1)}\ =\ \frac{n-1}{2u-1}\ \le\ \frac{2u-2}{2u-1},$$
    so $\left\lfloor\frac{k-1}{u}\right\rfloor = 0$. Hence
    $$\sum_{k=1}^{\left\lfloor\frac{(n+1)u-1}{2u-1}\right\rfloor} \binom{n-k+\left\lfloor \frac{k-1}{u}\right\rfloor}{k-1}\ =\ \sum_{k=1}^{\left\lfloor\frac{(n+1)u-1}{2u-1}\right\rfloor} \binom{n-k}{k-1}\ =\ \sum_{j=0}^{\left\lfloor\frac{(n-1)u}{2u-1}\right\rfloor} \binom{n-j-1}{j}.$$
    It remains to verify that
    \begin{equation}\label{e11}\left\lfloor\frac{(n-1)u}{2u-1}\right\rfloor\ =\ \left\lfloor\frac{n-1}{2}\right\rfloor, \mbox{ for }1\le n\le 2u-1.\end{equation}
    
    If $n = 2r-1$ with $1\le r\le u$, then 
    $$ \frac{(n-1)u}{2u-1}\ =\ \frac{2u}{2u-1}(r-1)\ \in\ [r-1, r).$$

    If $n = 2r$ with $1\le r\le u-1$, then
    $$\frac{(n-1)u}{2u-1}\ =\ \frac{(2r-1)u}{2u-1}\ \in\ [r-1, r).$$

    In both cases, we have \eqref{e11}.
\end{proof}

%%%%%%%%%%%%%%%%%%%%%%%%%%%%%%%%%%%%%%%%%%%%%%%%%%%%%%%%%%%%%%%%%%%%%%%%%%%%%%%%%%%%%%%%%%%%%%%%%%%%%%%%%%%%%%%%%%%%%%%%%%%%%%%%%%%%%%%%%%%%%%%%%%%%%%%%%%%%%%%%%%%%%%%%%%%%%%%%%%%%%%%%%%%%%%%%%%%%%%%%%%%%%%%%%%%%%%%%%%%%%%%%%%%%%%%%%%%%%%%%%%%%%%%%%%%%%%%%%%%%%%%%%%%%%%%%%%%%%%%%%%%%%%%%%%%%%%%%%%%%%%%%%%%%%%%%%%%%%%%%%%%%%%%%%%%%%%%%%%%%%%%%%%%%%%%%%%%%%%%%%%%

%%%%%%%%%%%%%%%%%%%%%%%%%%%%%%%%%%%%%%%%%%%%%%%%%%%%%%%%%%%%%%%%%%%%%%%%%%%%%%%%%%%%%%%%%%%%%%%%%%%%%%%%%%%%%%%%%%%%%%%%%%%%%%%%%%%%%%%%%%%%%%%%%%%%%%%%%%%%%%%%%%%%%%%%%%%%%%%%%%%%%%%%%%%%%%%%%%%%%%%%%%%%%%%%%%%%%%%%%%%%%%%%%%%%%%%%%%%%%%%%%%%%%%%%%%%%%%%%%%%%%%%%%%%%%%%%%%%%%%%%%%%%%%%%%%%%%%%%%%%%%%%%%%%%%%%%%%%%%%%%%%%%%%%%%%%%%%%%%%%%%%%%%%%%%%%%%%%%%%%%%%%%%%%%%%%%

\ \\
\end{document}